\par\addvspace{\medskipamount}
\newcommand{\pbc}[1][dr]{\save*!/#1-1.8pc/#1:(-1,1)@^{|-}\restore}
\newcommand{\NN}{\mathbb{N}}
\newcommand{\GG}{\mathfrak{G}}
\def\K{\mathcal{K}}
\def\B{\mathcal{B}}
\def\C{\mathcal{C}}
\def\D{\mathcal{D}}
\def\GG{\mathbb{G}}
\def\HH{\mathbb{H}}
\def\PP{\mathbb{P}}
\def\QQ{\mathbb{Q}}
\def\F{\mathcal{F}}
\def\SS{\mathcal{S}}
\def\W{\mathcal{W}}
\def\SS{\mathcal{S}}
\def\P{\mathcal{P}}
\def\K{\mathcal{K}}
\def\W{\mathcal{W}}
\def\Om{{\Omega}}
\def\Del{{\Delta}}
\newcommand{\HSwarrow}{\kern0.05ex\vcenter{\hbox{\Huge\ensuremath{\Swarrow}}}\kern0.05ex}
\newcommand{\hSwarrow}{\kern0.05ex\vcenter{\hbox{\huge\ensuremath{\Swarrow}}}\kern0.05ex}
\newcommand{\LLSwarrow}{\kern0.05ex\vcenter{\hbox{\LARGE\ensuremath{\Swarrow}}}\kern0.05ex}
\newcommand{\LSwarrow}{\kern0.05ex\vcenter{\hbox{\ensuremath{\simeq}}}\kern0.05ex}
\newcommand{\HSearrow}{\kern0.05ex\vcenter{\hbox{\Huge\ensuremath{\Searrow}}}\kern0.05ex}
\newcommand{\hSearrow}{\kern0.05ex\vcenter{\hbox{\huge\ensuremath{\Searrow}}}\kern0.05ex}
\newcommand{\LLSearrow}{\kern0.05ex\vcenter{\hbox{\LARGE\ensuremath{\Searrow}}}\kern0.05ex}
\newcommand{\LSearrow}{\kern0.05ex\vcenter{\hbox{\Large\ensuremath{\Searrow}}}\kern0.05ex}
\newcommand{\HDownarrow}{\kern0.05ex\vcenter{\hbox{\Huge\ensuremath{\Downarrow}}}\kern0.05ex}
\newcommand{\hDownarrow}{\kern0.05ex\vcenter{\hbox{\huge\ensuremath{\Downarrow}}}\kern0.05ex}
\newcommand{\LLDownarrow}{\kern0.05ex\vcenter{\hbox{\LARGE\ensuremath{\Downarrow}}}\kern0.05ex}
\newcommand{\LDownarrow}{\kern0.05ex\vcenter{\hbox{\Large\ensuremath{\Downarrow}}}\kern0.05ex}
\newcommand{\HUparrow}{\kern0.05ex\vcenter{\hbox{\Huge\ensuremath{\Uparrow}}}\kern0.05ex}
\newcommand{\hUparrow}{\kern0.05ex\vcenter{\hbox{\huge\ensuremath{\Uparrow}}}\kern0.05ex}
\newcommand{\LLUparrow}{\kern0.05ex\vcenter{\hbox{\LARGE\ensuremath{\Uparrow}}}\kern0.05ex}
\newcommand{\LUparrow}{\kern0.05ex\vcenter{\hbox{\Large\ensuremath{\Uparrow}}}\kern0.05ex}
\newtheorem{thm}{Theorem}[section]
\newtheorem{cor}[thm]{Corollary}
\newtheorem{lem}[thm]{Lemma}
\newtheorem{pro}[thm]{Proposition}
\theoremstyle{definition}
\newtheorem{define}[thm]{Definition}
\newtheorem{example}[thm]{Example}
\newtheorem{defn}[thm]{Definition}
\newtheorem{notn}[thm]{Notation}
\newtheorem{obs}[thm]{Observation}
\theoremstyle{remark}
\newtheorem{rem}[thm]{Remark}
\DeclareMathOperator{\fib}{fib}
\DeclareMathOperator{\op}{op}
\DeclareMathOperator{\Map}{Map}
\DeclareMathOperator{\red}{red}
\DeclareMathOperator{\Set}{Set}
\DeclareMathOperator{\RelCat}{RelCat}
\DeclareMathOperator{\Cat}{Cat}
\DeclareMathOperator{\Fun}{Fun}
\DeclareMathOperator{\Gpinf}{Gp_h}
\DeclareMathOperator{\Ex}{Ex}
\DeclareMathOperator{\Gp}{Gp}
\DeclareMathOperator{\aut}{aut}
\DeclareMathOperator{\cosk}{cosk}
\DeclareMathOperator{\id}{id}
\DeclareMathOperator{\Fin}{\Gp_h^{\fin}}
\DeclareMathOperator{\GpMod}{GpMod}
\DeclareMathOperator{\loops}{\Omega_{\SS}}
\DeclareMathOperator{\fin}{fin}
\DeclareMathOperator{\fact}{Fact}
\DeclareMathOperator{\hpbc}{\pbc\ar@{}[d]|(0.30){\;\;\;\;\;\;\;\;\;\;\;\sim}}
\DeclareMathOperator{\G-Mod}{G{\hbox{-}}Mod}
\DeclareMathOperator{\con}{conj}
\newcommand{\tgpd}{\kern0.05ex\vcenter{\hbox{\footnotesize\ensuremath{2}}}\kern0.05ex\mathcal{G}pd} 
\def\lrar{\longrightarrow}
\def\llar{\longleftarrow}
\def\hrar{\hookrightarrow}
\def\bar{\overline}
\def\bf{\textbf}
\newcommand\ackname{\textbf{Acknowledgements}}
  \newenvironment{acknowledgements}{
      \titlepage
      \null\vfil
      \@beginparpenalty\@lowpenalty
      \begin{center}
        \bfseries \ackname\
        \@endparpenalty\@M
      \end{center}}
     {\par\vfil\null\endtitlepage}
\def\kan{\mathtt{K} an}
\def\kan{\mathcal{S}}
\def\bar{\overline}
\def\bf{\textbf}
\renewcommand{\phi}{\varphi}
\title{Sylow theorems for $\infty$-groups}
\author{Matan Prasma\;\;\; Tomer M. Schlank}
\date{\today}
\begin{document}
\maketitle

\tableofcontents

\begin{abstract}

Viewing Kan complexes as $\infty$-groupoids implies that pointed and connected Kan complexes are to be viewed as $\infty$-groups. A fundamental question is then: to what extent can one ``do group theory" with these objects? In this paper we develop a notion of a \textbf{finite $\infty$-group}: an $\infty$-group whose homotopy groups are all finite. We prove a homotopical analogue of Sylow theorems for finite $\infty$-groups. This theorem has two corollaries: the first is a homotopical analogue of Burnside's fixed point lemma for $p$-groups and the second is a ``group-theoretic" characterisation of finite nilpotent spaces.  

\end{abstract}

\section{Introduction}

One of the earliest insights of homotopy theory was that through the construction of a classifying
space, one can view group theory as the connected components of the theory of pointed, connected, $1$-truncated spaces (i.e. with no homotopy groups above dimension $1$). In his work \cite{Wh}, Whitehead extended this insight by describing pointed, connected $2$-truncated spaces by means of $2$-groups (in his term: crossed modules). This result was further extended by Loday \cite{Lod} to an ``algebraic" characterisation of pointed, connected, $n$-truncated spaces for $n\in\mathbb{N}$.
 
Relying on the classifying space construction, early work of Stasheff, and of Boardman and Vogt \cite{St,BV} demonstrated that weakening the unit, inverse and associativity axioms of a topological group by requiring these to hold only up to coherent homotopy, gives a characterisation of spaces equivalent to a loop space. 

A more categorical perspective by Joyal \cite{Joy} further clarified the picture: viewing spaces (more precisely: Kan complexes) as $\infty$-groupoids implies that pointed connected spaces are to be viewed as $\infty$-groups. 

Indeed, when one loops down a pointed connected $\infty$-groupoid, composition incarnates to concatenation of loops and higher associativity may be pictured as straightening paths, paths between paths and so on. 
An appropriate relative category (and thus an $\infty$-category) of ``spaces equivalent to a loop space" may be defined and its associated $\infty$-category is equivalent to the $\infty$-category of pointed connected spaces. 

To what extent can we ``do group theory" with $\infty$-groups? 
One place to start is finite group theory, and this requires a suitable notion of a ``finite" $\infty$-group. A natural (and prevalent) choice is to take this to mean a pointed connected space homotopy groups are all finite. Such a space is called a \textbf{\boldmath$p$-$\infty$-group} if all its homotopy groups are $p$-groups.  
The purpose of this paper is to show that finite pointed connected spaces admit Sylow theorems, analogous to the classical ones, and to demonstrate how such theorems can be useful for homotopy-theoretic purposes. More precisely:

\begin{define}
Let $p$ be a prime. A map $f:\PP\lrar \GG$ of finite $\infty$-groups is called a \textbf{\boldmath$p$-Sylow map} if for all $n\geq 1$, the map
$$\pi_n(f): \pi_n(\PP) \lrar \pi_n(\GG)$$
is an inclusion of a $p$-Sylow subgroup.
\end{define}

Our analogue of the Sylow theorems will take the following form:

\begin{thm}\label{t:syl}
Let $\GG$ be a finite $\infty$-group and let $p$ be a prime.
\begin{enumerate}
\item The $\infty$-groupoid $\mathtt{P}_p$ spanned by $p$-Sylow maps $\PP\lrar \GG$ is equivalent to the \textbf{set} of $p$-Sylow subgroups of $\pi_1(\GG)$. In particular, $\mathtt{P}_p\neq \emptyset$ and $|\pi_0\mathtt{P}_p|\equiv 1\;(mod\; p)$.
\item
For every finite $p$-$\infty$-group $\HH$, any map $\HH\lrar \GG$ factors as $$\xymatrix{\HH\ar[rr]\ar[dr] && \GG\\ &\PP\ar[ur]& }$$ where $\PP\lrar \GG$ is a $p$-Sylow map.
\item
Every two $p$-Sylow maps are conjugate . 
\end{enumerate}
 
\end{thm}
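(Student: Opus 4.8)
The plan is to deduce the statement from part~(1) of Theorem~\ref{t:syl} together with the classical Sylow theorem. Recall that conjugation equips the $\infty$-group $\GG$ with an action on itself and hence, by post-composition, with an action on the $\infty$-groupoid of maps into $\GG$; two maps are conjugate precisely when they lie in the same orbit of this action (equivalently, when they become freely homotopic). Since the objects of $\mathtt{P}_p$ are $p$-Sylow maps with varying but equivalent sources, and its morphisms include the equivalences of sources, I would regard the conjugacy relation on $p$-Sylow maps as the orbit relation of this action on $\pi_0\mathtt{P}_p$; the claim then becomes that the action is transitive on $\pi_0\mathtt{P}_p$.

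First I would record that the conjugation action of $\GG$ restricts to an action on the sub-$\infty$-groupoid $\mathtt{P}_p$: conjugation is an automorphism of $\GG$, so it acts by equivalences on each $\pi_n(\GG)$ and therefore carries $p$-Sylow subgroups to $p$-Sylow subgroups, preserving the property of a map being a $p$-Sylow map. Consequently the orbits of this action on $\pi_0\mathtt{P}_p$ are exactly the conjugacy classes of $p$-Sylow maps.

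Next I would use that, by part~(1), the $\infty$-groupoid $\mathtt{P}_p$ is discrete, equivalent to the set $\mathrm{Syl}_p(\pi_1\GG)$ of $p$-Sylow subgroups of $\pi_1(\GG)$. An action of the group-like space $\Omega\GG$ of ``elements'' of $\GG$ on a discrete set factors through $\pi_0(\Omega\GG)=\pi_1(\GG)$, so the conjugation action descends to an honest action of the group $\pi_1(\GG)$ on the finite set $\mathrm{Syl}_p(\pi_1\GG)$. Unwinding the equivalence of part~(1) — which sends a $p$-Sylow map $f\colon\PP\to\GG$ to the image subgroup $\img(\pi_1 f)\leq\pi_1(\GG)$ — I would check that conjugating $f$ by a loop $\gamma$ replaces this image by $[\gamma]\cdot\img(\pi_1 f)\cdot[\gamma]^{-1}$, so that the descended action is precisely the classical conjugation action of $\pi_1(\GG)$ on its $p$-Sylow subgroups.

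Finally, the classical Sylow theorem asserts that $\pi_1(\GG)$ acts transitively on $\mathrm{Syl}_p(\pi_1\GG)$ by conjugation; transporting this transitivity back along the equivalence of part~(1) shows that any two $p$-Sylow maps are conjugate. The main obstacle is the penultimate step: one must verify carefully that the homotopy-coherent conjugation action, once restricted to the discrete $\infty$-groupoid $\mathtt{P}_p$ and identified with an action on $\pi_0$, genuinely reduces to classical conjugation of subgroups. This amounts to tracking the equivalence of part~(1) through the conjugation and confirming that no higher-homotopical data survives — which is exactly what the discreteness established in part~(1) guarantees.
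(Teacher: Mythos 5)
There is a genuine gap: your proposal only addresses part~(3) of the theorem, and it does so by \emph{assuming} part~(1). But parts~(1) and~(2) are where essentially all of the work lies, and they are part of the statement to be proved, so taking~(1) as an input is circular. The paper's proof of~(1) and~(2) rests on Proposition~\ref{p:factorization}: the $\infty$-groupoid of lifts of a $p$-Sylow factorisation from the $n$-th Postnikov stage to the $(n+1)$-st is contractible, whence $\mathtt{P}_f\simeq\mathtt{P}_{f[1]}$. Proving this contractibility is the technical heart of the paper --- it uses the $k$-invariant machinery of Theorem~\ref{t:k-invariants} to identify the space of lifts with a space of maps $\PP_0\to K(A^{(p)},n+2)//G_p$ over $BG_p$, and then a Serre class argument (cohomology of a $p$-space with prime-to-$p$ coefficients vanishes) to show that space is contractible. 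None of this appears in your proposal, so the discreteness claim in~(1), the congruence $|\pi_0\mathtt{P}_p|\equiv 1\;(\mathrm{mod}\;p)$, and the existence of factorisations in~(2) (which needs the lifting statement for a general finite $p$-$\infty$-group $\HH$, not just $\HH=\ast$) are all left unestablished.

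Your argument for part~(3), granting part~(1), is essentially sound and close to the paper's: the paper likewise uses classical conjugacy of $p$-Sylow subgroups of $\pi_1(\GG)$ to move any $p$-Sylow map to one with a prescribed $\pi_1$-image, and then uses $\mathtt{P}_f\simeq\mathtt{P}_{f[1]}$ to conclude that any two such maps are equivalent over $\GG$. Your reformulation in terms of the conjugation action of $\pi_1(\GG)$ on the discrete set $\pi_0\mathtt{P}_p$ is a reasonable packaging of the same idea, and the point you flag --- checking that the descended action is literally conjugation of subgroups under the identification of~(1) --- is indeed the step one must verify. But as a proof of the theorem as stated, the proposal is incomplete: it supplies only the easiest third of the argument.
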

The notions appearing in the theorem above will be explained in $\S$~\ref{s:Sylow}.

\subsection{Organisation}\label{ss: organization}
After some preliminary discussion on the category of $\infty$-groups, we revisit the theory of $k$-invariants in section~\ref{s:k-invariants}. The main result, Theorem~\ref{t:k-invariants} asserts that a suitably defined $\infty$-category of ``$n^{\text{th}}$-level $k$-invariants" is equivalent to the $\infty$-category of $(n+1)$-truncated spaces. In section~\ref{s:Sylow} we present our analogue of the Sylow theorems for $\infty$-groups which appears in the form of Theorem~\ref{t:syl}. Lastly, in section~\ref{s:applications} we present two elementary applications of the Sylow theorems. The first is an analogue of the Burnside's fixed-point lemma, which states that if a $p$-$\infty$-group coherently acts on a prime-to-$p$ space, then the action admits a homotopy fixed point. The second application gives a characterisation of (pointed, connected) finite nilpotent spaces that is analogous to the characterisation of finite nilpotent groups. 

\begin{ackname}
This work takes a path envisioned by Emmanuel Farjoun and as such carries much of his spirit. We are very grateful for his continuous encouragement and suggestions. The first author was supported by the NWO grant 62001604. 
\end{ackname}

\section{Preliminaries}\label{s:Pre}

Throughout, a \textbf{space} will always mean a Kan complex and we denote by $\SS$ the category of spaces. By default, all constructions in simplicial sets will be adjusted to result in $\SS$ via the Kan replacement functor $\Ex^\infty$. We will indicate explicitly where such adjustments are made.

An \textbf{\boldmath$\infty$-category} will mean a simplicial set satisfying the weak Kan condition as in \cite{Joy} and \cite{Lur09}.
We denote the (large) $\infty$-category of $\infty$-categories by $\Cat_\infty$. If $\C\in\Cat_\infty$ and $X$ is a simplicial set, we let $\Fun(X,\C)$ be the mapping simplicial set; this is an $\infty$-category. 
Let $\C$ be an $\infty$-category. We denote by $h\C$ its \textbf{homotopy category}. A map $f:x\lrar y$ in $\C_1$ is an \textbf{equivalence} if its image in $h\C$ is invertible. If $\D_0\subseteq \C_0$ is a subset of objects of $\C$, then the \textbf{full subcategory} of $\C$ spanned by $\D_0$ is the pullback of simplicial sets

$$\xymatrix{\D\pbc\ar@{^(->}[r] \ar[d] & \C\ar[d]\\ \bar{\D_0}\ar@{^(->}[r] & h\C,}$$

where $\bar{\D_0}$ is the full subcategory of $h\C$ spanned by $\D_0$. In particular, if we let $\iota \C\subseteq \C$ be the maximal Kan complex of $\C$, the $\infty$-groupoid spanned by $\D_0$ is the full subcategory of $\iota \C$ spanned by $\D_0$.

A \textbf{relative category} is a pair $(\C,\W_{\C})$ of a category $\C$ together with a subcategory $\W_{\C}\subseteq \C$ called weak equivalences which contains all isomorphisms. If $(\C,\W_{\C})$ and $(\C',\W_{\C'})$ are relative categories, a \textbf{relative functor} (or map) from $\C$ to $\C'$ is a functor $\C\lrar \C'$ which takes $\W_\C$ into $\W_{\C'}$. 

Let $\Set_\Delta^+$ be the category of (possibly large) \textbf{marked simplicial sets}. The objects of $\Set_\Delta^+$ are pairs $(X,\mathcal{E}_X)$ where $X$ is a simplicial set and $\mathcal{E}_X$ is a subset of $X_1$ (referred to as the \textbf{marked edges}) containing the degenerate edges. The morphisms of $\Set_\Delta^+$ are maps of simplicial sets that preserve the marked edges. The category $\Set_\Delta^+$ admits a model structure, called the \textbf{Cartesian model structure} (see \cite[\S 3.1]{Lur09}), in which: 
\begin{enumerate}
\item the fibrant objects are $\infty$-categories with equivalences being the marked edges;
\item cofibrations are monomorphisms;
\item weak equivalences between fibrant objects are precisely categorical equivalences.
\end{enumerate} 

If $(\C,\W_{\C})$ is a relative category, then the nerve $N\C$ is naturally a marked simplicial set with $\W_{\C}$ the marked edges. The underlying simplicial set of the fibrant replacement of $N(\C)$ in the Cartesian model structure will be denoted by $\C_\infty$. A map $\C\lrar \C'$ of relative categories is called an \textbf{equivalence} if $\C_\infty\lrar \C'_\infty$ is an equivalence of $\infty$-categories. We denote by $\RelCat$ the (large) relative category of relative categories. 

A \textbf{weak inverse} to a map of relative categories $F:\C\lrar \C'$ is a map of relative categories $G:\C'\lrar \C$ together with a zig-zag of natural transformations $\id\nRightarrow F\circ G$ and $\id\nRightarrow G\circ F$ which are (component-wise) weak equivalences. Note that if $F$ admits a weak inverse, then it is an equivalence of relative categories.    

\subsection{The category of $\infty$-groups}\label{ss:GPinf}
 
The relative category $\SS$ of Kan complexes is a standard model for the homotopy theory of $\infty$-groupoids. Similarly:

\begin{defn}
An \textbf{\boldmath$\infty$-group} is a pointed connected space and an $\infty$-group map is a map of pointed connected spaces. We denote by $\Gpinf:=\SS_*^{\geq 1}$ the relative category of $\infty$-groups.
\end{defn}

Our terminology is meant to suggest that an $\infty$-group is a higher categorical version of the notion of a group. However, it is also common in algebraic topology to view a loop space as a ``group up to (higher) homotopy". 
For the sake of completeness, let us prove that the homotopy theories of pointed connected spaces and of loop spaces, as defined here, are equivalent. 

\begin{defn}\
\begin{enumerate}
\item A \textbf{loop space} is a triple $(X, \B X,\mu)$ where $X$ is a space, $\B X$ is a pointed connected space and $\mu:X\overset{\simeq}{\lrar} \Om \B X$ is a homotopy equivalence.
\item
A \textbf{loop space map} from $(X,\B X,\mu)$ to $(Y,\B Y,\nu)$ is a pair consisting of a pointed map $\bar{\phi}:\B X\lrar \B Y$ and a map $\phi:X\lrar Y$ rendering the following square commutative
\begin{equation}\label{e:maps}
\xymatrix{X\ar[d]_{\mu}^{\simeq}\ar[r]^{\phi} & Y\ar[d]^{\nu}_{\simeq} \\\Om \B X \ar[r]^{\Om \bar{\phi}} &\Om \B Y .}
\end{equation}
\item
A loop space map is called an \textbf{equivalence} if $\phi$ and $\bar{\phi}$ are homotopy equivalences. We denote the \textbf{relative category} of loop spaces by $\loops$.
 
\end{enumerate}
\end{defn}

Let $\B:\loops\lrar \Gpinf$ be the relative functor given by $\B(X,\B X,\mu)=\B X$ and let $\Om:\Gpinf\lrar \loops$ be the relative functor given by $\Om (A):=(\Om A, A,\id_{\Om A})$. We have an adjunction of relative functors $$\xymatrix{\Om: \Gpinf\ar@<1ex>[r]& \loops:\B\ar@<1ex>[l]_{\upvdash}}$$ whose unit and counit transformations are homotopy equivalences in each component. Thus, $\Om$ is a weak inverse to $\B$ and we obtain the following:

\begin{cor}
The adjunction $\Om\dashv \B$ induces an equivalence of the underlying $\infty$-categories $\left(\Gpinf\right)_\infty\simeq\left(\loops\right)_\infty$. 
\end{cor}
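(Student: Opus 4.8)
The plan is to deduce the statement formally from the material already assembled, reducing it to the single assertion that $\Om$ and $\B$ are mutually weak-inverse relative functors. By the definition of an \emph{equivalence of relative categories}, $\B$ is such an equivalence precisely when the induced map $\B_\infty:(\loops)_\infty\lrar(\Gpinf)_\infty$ is an equivalence of $\infty$-categories; and by the Note following the definition of a weak inverse, it suffices to exhibit a weak inverse to $\B$. The candidate is $\Om$, and a homotopy inverse to $\B_\infty$ then furnishes the claimed equivalence $(\Gpinf)_\infty\simeq(\loops)_\infty$.

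The whole content is therefore to produce the two zig-zags of component-wise weak equivalences demanded by the definition of a weak inverse, and I would read them straight off the formulas $\Om(A)=(\Om A, A,\id_{\Om A})$ and $\B(X,\B X,\mu)=\B X$. One composite is strict: $\B\circ\Om=\id_{\Gpinf}$ on the nose, so the corresponding zig-zag is the identity natural transformation and is trivially component-wise an equivalence. The other composite sends $(X,\B X,\mu)$ to $\Om\B(X,\B X,\mu)=(\Om\B X,\B X,\id_{\Om\B X})$, and here I would write down the natural transformation $\id_{\loops}\Rightarrow\Om\circ\B$ whose component at $(X,\B X,\mu)$ is the loop-space map $(\id_{\B X},\mu)$.

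The point to verify is that $(\id_{\B X},\mu)$ is a legitimate loop-space map natural in $(X,\B X,\mu)$: the defining square \eqref{e:maps} reads $\id_{\Om\B X}\circ\mu=\Om(\id_{\B X})\circ\mu$, which holds on the nose, and naturality is immediate from the definition of a loop-space map. Each such component is then a weak equivalence because both constituents are: $\id_{\B X}$ trivially, and $\mu$ by the very definition of a loop space as a triple in which $\mu$ is a homotopy equivalence. This is the only genuine, if light, verification, and I expect it to be the main (and modest) obstacle; the remaining bookkeeping—that $\B$ and $\Om$ are relative functors—follows since $\B$ merely reads off $\bar{\phi}$ and $\Om$ preserves weak equivalences between pointed connected spaces. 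Having produced both zig-zags, I would invoke the definition of weak inverse to conclude that $\Om$ is a weak inverse to $\B$, hence that $\B_\infty$ is an equivalence of $\infty$-categories, and a choice of homotopy inverse yields the asserted equivalence $(\Gpinf)_\infty\simeq(\loops)_\infty$.
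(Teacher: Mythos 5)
Your proof is correct and takes essentially the same route as the paper: the paper likewise observes that $\Om$ is a weak inverse to $\B$ because the comparison transformations (one of which is the identity, since $\B\circ\Om=\id_{\Gpinf}$ on the nose) are component-wise homotopy equivalences. You merely make explicit the component $(\id_{\B X},\mu)$ of the transformation $\id_{\loops}\Rightarrow\Om\circ\B$, which the paper leaves implicit.
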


\begin{rem} 
By means of the Kan loop group, any $\infty$-group can be replaced by a simplicial group. This may be viewed as a rectification procedure since the homotopy theory of simplicial groups is equivalent to that of pointed connected spaces. However, the existence of a rectification is \textbf{model dependent}: if we were to work in a relative category $(\C,\W)$, equivalent to $\SS$ there would be no reason that the induced homotopy theory of group objects in $\C$ will be equivalent to $\Gpinf$. For example if $(\Cat,\W_{\text{Thom}})$ is the relative category of categories and Thomason equivalences, then group objects in $\C=\Cat$ are precisely crossed modules, and their associated homotopy theory models only pointed connected $2$-types. 
\end{rem}

\section{The theory of $k$-invariants revisited}\label{s:k-invariants}

Our proof of the Sylow theorems for $\infty$-groups will require a strengthening of the classical theory of $k$-invariants to which we devote this section. Informally speaking, we wish to show that the $\infty$-category of pointed connected $(n+1)$-truncated spaces is equivalent to that of triples $(X,A,\kappa)$ where $X$ is a pointed connected $n$-truncated space, $A$ a $\pi_1X$-module and $\kappa$ is a map $$\xymatrix@C=1em@R=1em{X\ar@{-->}[rr]^-{\kappa}\ar[rd] && K(A,n+2)//\pi_1(X)\ar[dl]\\ & B\pi_1X.}$$ Although this result is known in folklore, we could not find a proof of it in the literature. We chose to give the details here as a service to the community.  

Let us briefly describe how the above-mentioned equivalence will be established. First, we will prove that the two $\infty$-categories at hand admit Cartesian fibrations (see \cite[2.4.2.1]{Lur09}) to the $\infty$-category of (pointed, connected) $n$-truncated spaces. We will then construct a map of Cartesian fibrations between them and prove that the induced map on fibers is a map of coCartesian fibrations over the category $\GpMod$ of abelian groups with an action of an arbitrary group. Lastly, we will use the classical results of \cite{DK84} to show that the latter map induces an equivalence on each fiber.

The verification that a map of $\infty$-categories is a (co)Cartesian fibration is usually an involved task. However, a result of Hinich (\cite[Proposition 2.1.4]{Hin}) gives simple conditions on a map of \textbf{relative categories} which assure that the underlying map on $\infty$-categories is a (co)Cartesian fibration. In order to use this method we will need to ``rectify" certain diagrams of $\infty$-categories to diagrams of relative categories. This is the reason for the various technical conditions that appear throughout this section.      

We first set up the relative category version of the Grothendieck construction. Recall that if $\F:\C\lrar \Cat$ is a (pseudo-)functor, the \textbf{Grothendieck construction} $\int_{\C}\F$ is the category where an object is a pair $(A,X)$ with $A\in\C$ and $X\in \F(A)$ and a morphism $(A,X)\lrar (B,Y)$ consists of a pair $(f,\phi)$ where $f:A\lrar B$ is a map in $\C$ and $\phi:f_!X:=\F(f)(X)\lrar Y$ is a map in $\F(B)$. The projection $\pi:\int_\C\F\lrar \C$ is then a \textbf{coCartesian fibration} classifying $\F$.
    
\begin{defn}
Let $(\C,\W_{\C})$ be a relative category and $\F:\C\lrar \RelCat$ a relative functor. The \textbf{Grothendieck construction} of $\F$ is the relative category $\left(\int_{\C} \F,\W_{\F}\right)$ where a map $(f,\phi):(A,X)\lrar (B,Y)$ is in $\W_\F$ if $f\in \W_{\C}$ (and thus $\phi$ is an equivalence of relative categories).    
\end{defn}       

Let $\C$ be a relative category and $\F:\C\lrar \RelCat$ a relative functor. Consider the underlying map of $\infty$-categories $\F_\infty:\C_\infty\lrar \Cat_\infty$ and the associated coCartesian fibration of $\infty$-categories $\int_{\C_\infty}\F_\infty\lrar \C_\infty$.
The following is a corollary of \cite[Proposition 2.1.4]{Hin}, adjusted for our purposes:

\begin{thm}\label{t:hin}
Let $\C$ and $\F:\C\lrar \RelCat$ be as above. Then there is a canonical categorical equivalence $\left(\int_\C\F\right)_\infty\overset{\simeq}{\lrar} \int_{\C_\infty}\F_\infty$ over $\C_\infty$.
\end{thm}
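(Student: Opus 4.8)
The plan is to exhibit the desired equivalence as an equivalence of coCartesian fibrations over $\C_\infty$. The projection $\pi:\int_\C\F\lrar\C$, $(A,X)\mapsto A$, is a relative functor: by the very definition of $\W_\F$, a morphism $(f,\phi)$ lies in $\W_\F$ exactly when $f\in\W_\C$, so $\pi$ carries $\W_\F$ into $\W_\C$. I will show that the induced map $\pi_\infty:\left(\int_\C\F\right)_\infty\lrar\C_\infty$ is a coCartesian fibration whose classifying functor is $\F_\infty$. Since $\int_{\C_\infty}\F_\infty\lrar\C_\infty$ is, by definition, the coCartesian fibration classified by $\F_\infty$, the uniqueness in the straightening--unstraightening equivalence \cite[\S 3.2]{Lur09} then produces a canonical equivalence $\left(\int_\C\F\right)_\infty\simeq\int_{\C_\infty}\F_\infty$ over $\C_\infty$.

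To see that $\pi_\infty$ is a coCartesian fibration I apply \cite[Proposition 2.1.4]{Hin}. On the level of ordinary categories $\pi$ is the classical Grothendieck construction, hence a coCartesian fibration, with a morphism $(f,\phi)$ being $\pi$-coCartesian precisely when $\phi:f_!X\lrar Y$ is an isomorphism. The weak equivalences are compatible with this fibration structure in the sense required by Hinich: $(f,\phi)\in\W_\F$ iff $f\in\W_\C$ and the fibrewise component $\phi$ is a weak equivalence, and each pushforward functor $\F(f):\F(A)\lrar\F(B)$ is a relative functor and so preserves weak equivalences. These are exactly the hypotheses that guarantee that localizing $\int_\C\F$ and $\C$ turns $\pi$ into a coCartesian fibration of $\infty$-categories.

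It remains to identify the classifying functor of $\pi_\infty$ with $\F_\infty$. The fibre of $\pi$ over an object $A\in\C$ is the relative category $\F(A)$, with its vertical weak equivalences, so Hinich's result identifies the fibre of $\pi_\infty$ over $A$ with $\F(A)_\infty=\F_\infty(A)$. Likewise, transport along a morphism $f:A\lrar B$ is computed by the pushforward $\F(f)$, whose localisation is $\F_\infty(f)$. Thus the straightening of $\pi_\infty$ agrees with $\F_\infty$ on objects and on morphisms, which is what we needed; naturality of the straightening equivalence then upgrades this to the canonicity of the equivalence over $\C_\infty$.

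The main technical point --- and the content we are borrowing from \cite[Proposition 2.1.4]{Hin} --- is precisely that localisation commutes with passage to fibres and with the coCartesian pushforward functors: a priori the localisation of a fibre of $\int_\C\F$ need not agree with the localisation of the relative category $\F(A)$, and it is the compatibility condition built into the definition of $\W_\F$ (that a morphism is a weak equivalence iff it is so downstairs and fibrewise) that makes this identification hold. Once this is granted the remaining bookkeeping is formal, so the only real work is the careful verification of Hinich's hypotheses for $\pi$ and the objectwise matching of the resulting straightening datum with $\F_\infty$.
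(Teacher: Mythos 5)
The paper offers no proof of this theorem beyond the remark that it is a corollary of Hinich's Proposition 2.1.4, and your argument is exactly the intended derivation: check Hinich's hypotheses for the projection $\pi$, identify the fibres and transport functors of the localized fibration with those of $\F_\infty$, and conclude by the uniqueness clause of straightening--unstraightening. One small slip worth flagging: in your second paragraph you describe $\W_\F$ as requiring the fibrewise component $\phi$ to be a weak equivalence, whereas the paper's definition marks $(f,\phi)$ whenever $f\in\W_\C$ with no condition on $\phi$ (your first paragraph states it correctly); this only matters because it is the marking one feeds into Hinich's compatibility hypotheses.
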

       
Let $\SS^{\geq 1,\leq n}_{*}$  be the relative category of pointed and connected $n$-truncated spaces (throughout, we assume $n\geq 1$). For a space $X\in \kan$ we denote by $X[n]:=\cosk_{n+1}X$ its $(n+1)$-coskeleton, which is a model for the $n^{\text{th}}$-Postnikov section of $X$. 
If $f:X \lrar Y$ is a map of spaces we denote by $f[n]:X[n] \lrar Y[n]$ the corresponding map on Postinkov sections.
A space $X$ is called \textbf{\boldmath$n$-coskeletal} if $X\cong\cosk_nX$. 

A space $X\in \SS$ is called \textbf{reduced} if $X_0=*$. To every space $X\in\SS_*^{\geq 1}$ we denote by $X_{\red}$ the reduced space obtained by the pullback
$$\xymatrix{X_{\red}\pbc\ar[r]\ar[d] & \{\ast\}\ar[d] \\ X\ar[r] & \cosk_0 X_0}$$
where $X_0$ is the constant simplicial set on the vertices of $X$. The functor $X\mapsto X_{\red}$ preserves fibrations which are surjective on $\pi_1$ and in particular takes Kan complexes to Kan complexes.
We let $\tau_{ n}\SS_{\red}^{\geq 1}\subseteq\SS_{*}^{\geq 1,\leq n}$ be the full subcategory spanned by reduced $(n+1)$-coskeletal spaces with its induced relative category structure. 

\begin{obs}\label{o:reduced}
The inclusion $\tau_{ n}\SS_{\red}^{\geq 1}\subseteq\SS^{\geq 1,\leq n}_{*}$ is an equivalence of relative categories.
\end{obs}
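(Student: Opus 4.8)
The plan is to exhibit an explicit weak inverse to the inclusion $\iota:\tau_n\SS_{\red}^{\geq 1}\hookrightarrow\SS_*^{\geq 1,\leq n}$ and then invoke the criterion recorded in \S\ref{s:Pre}: a map of relative categories admitting a weak inverse is an equivalence. I would take the candidate inverse to be the composite
\[
R:=\cosk_{n+1}\big((-)_{\red}\big):\SS_*^{\geq 1,\leq n}\lrar \tau_n\SS_{\red}^{\geq 1},\qquad R(X)=\cosk_{n+1}(X_{\red}),
\]
that is, first reduce and then take the $(n+1)$-coskeleton.

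First I would check that $R$ is well defined, i.e. that $R(X)$ really lies in $\tau_n\SS_{\red}^{\geq 1}$. Because the coskeleton leaves simplices in dimensions $\leq n+1$ untouched, $(\cosk_{n+1}(X_{\red}))_0=(X_{\red})_0=\ast$, so $R(X)$ is reduced (hence pointed and connected); it is $(n+1)$-coskeletal by construction; and it is a Kan complex since $(-)_{\red}$ preserves Kan complexes (as noted before the statement) and so does $\cosk_{n+1}$. I would then record two homotopical facts. (i) For a connected Kan complex $X$ the natural map $X_{\red}\lrar X$ is a weak equivalence: the canonical map $X\lrar \cosk_0 X_0$ is a Kan fibration once $X$ is connected — for $n\geq2$ every horn already carries all vertices so a filler exists in $X$, while for $n=1$ an edge of $\cosk_0 X_0$ between two vertices of $X$ is lifted by an edge of $X$ joining them, which exists by connectivity — its base $\cosk_0 X_0$ is contractible because $X_0\neq\emptyset$, and $X_{\red}$ is the fibre over the basepoint, so the long exact sequence gives $X_{\red}\simeq X$. (ii) By the identification $X[n]=\cosk_{n+1}X$ recalled above, $\cosk_{n+1}$ models the $n^{\text{th}}$ Postnikov section, whence the natural map $Z\lrar\cosk_{n+1}Z$ is a weak equivalence whenever $Z$ is $n$-truncated, and $\cosk_{n+1}$ sends weak equivalences between Kan complexes to weak equivalences. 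Combining (i) and (ii) with two-out-of-three shows $R$ preserves weak equivalences, so it is a relative functor.

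With these facts the two composites are immediate. For $Y\in\tau_n\SS_{\red}^{\geq 1}$ one has $Y_{\red}=Y$ (as $Y$ is reduced) and $\cosk_{n+1}Y=Y$ (as $Y$ is $(n+1)$-coskeletal), so $R\circ\iota=\id$ on the nose, giving the required natural equivalence $\id\Rightarrow R\circ\iota$. For $X\in\SS_*^{\geq 1,\leq n}$ the reduction map of (i) and the Postnikov map of (ii) assemble into a natural zig-zag
\[
X\;\xleftarrow{\;\simeq\;}\;X_{\red}\;\xrightarrow{\;\simeq\;}\;\cosk_{n+1}(X_{\red})=\iota R(X)
\]
of componentwise weak equivalences, which is exactly a zig-zag $\id\Rightarrow \iota\circ R$ of the kind permitted in the definition of weak inverse. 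Hence $R$ is a weak inverse to $\iota$, and the claim follows.

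The main obstacle I anticipate is fact (i): that the reduction $X_{\red}\lrar X$ is a weak equivalence. This is where connectivity of $X$ is indispensable — without it the map $X\lrar\cosk_0 X_0$ need not be a fibration, since an edge of $\cosk_0 X_0$ joining vertices in distinct components of $X$ admits no lift. Everything else reduces to standard properties of the coskeleton (preservation of Kan complexes, its identification with the Postnikov functor, and preservation of weak equivalences between Kan complexes), which I would cite rather than reprove.
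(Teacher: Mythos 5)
Your proposal is correct: the paper states this as an unproved Observation, and your explicit weak inverse $\cosk_{n+1}\circ(-)_{\red}$ together with the zig-zag $X\xleftarrow{\simeq}X_{\red}\xrightarrow{\simeq}\cosk_{n+1}(X_{\red})$ is exactly the standard justification the authors are implicitly relying on (note they record precisely the facts you use: that $(-)_{\red}$ preserves Kan complexes via the fibration $X\twoheadrightarrow\cosk_0X_0$, and that $\cosk_{n+1}$ models the $n$-th Postnikov section). Your verification of fact (i), including the role of connectivity in making $X\lrar\cosk_0X_0$ a Kan fibration, is accurate, and the zig-zag form of the natural equivalence is permitted by the paper's definition of weak inverse.
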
   
  
Fix a space $X\in\tau_{ n}\SS_{\red}^{\geq 1}$. We denote by $G:=\pi_1(X)$ and by $BG$ the simplicial set given by the nerve of $G$. Note that there is a canonical fibration $X\twoheadrightarrow BG$. For a $G$-module $A$, we choose a (functorial) model for $K(A,n+2)//G$ as a Kan fibration over $BG$ (see e.g. \cite{DK84}).  Thus, both $X$
and $K(A,n+2)//G$ are objects in the category $\left({\SS}_{_{/BG}}\right)^{\fib}$ of fibrations over $BG$.

\begin{defn}

Let $X\in\tau_n\SS_{\red}^{\geq 1}$ and let $A$ be a $G$-module. The relative category $\K_n^X(A)$ has as objects the diagrams $$\xymatrix{X & Y\ar@{->>}[l]_\simeq\ar[r] & K(A,n+2)//G}$$ in $\left({\SS}_{_{/BG}}\right)^{\fib}$ with $Y\in \tau_{ n}\SS_{\red}^{\geq 1}$ . A morphism in $\K_n^X(A)$ is a diagram of the form

$$\xymatrix@R=1em@C=1em{& & Y\ar[dd]\ar[dr]\ar@{->>}[dll]_\simeq &\\ X & & & \;\;\;\;\;\;K(A,n+2)//G\\ & & Z\ar@{->>}[ull]^\simeq\ar[ur] &}$$ 
We take all maps in $\K_n^X(A)$ to be weak equivalences.
\end{defn}
    
Since $\left({\SS}_{_{/BG}}\right)^{\fib}$ is a category of fibrant objects, it admits a calculus of right fractions (\cite{Low}). It follows that the relative category $\K_n^X(A)$ models the corresponding mapping space:     
    
\begin{pro}[\cite{DK84}]
There is an equivalence of $\infty$-groupoids $$\K_n^X(A)_\infty\simeq \Map_{_{/BG}}(X,K(A,n+2)//G).$$
\end{pro}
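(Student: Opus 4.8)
The plan is to deduce the statement from the calculus of right fractions in the category of fibrant objects $\left({\SS}_{_{/BG}}\right)^{\fib}$, and then to cut the middle object of the cocycles down to $\tau_n\SS_{\red}^{\geq 1}$ by a cofinality argument. Write $W:=K(A,n+2)//G$. Every morphism of $\K_n^X(A)$ is a weak equivalence, so $\K_n^X(A)_\infty$ is the Kan replacement $\Ex^\infty N\K_n^X(A)$ of its nerve, and the task is to identify this nerve, up to weak equivalence, with the derived mapping space $\Map_{_{/BG}}(X,W)$. Since $\left({\SS}_{_{/BG}}\right)^{\fib}$ is a category of fibrant objects, Low's theorem (\cite{Low}) applies: the \emph{full} cocycle category $\mathbf H$, whose objects are spans $X\lar Y\rar W$ over $BG$ with left leg a trivial fibration and $Y$ an arbitrary fibration over $BG$, and whose morphisms are maps over both $X$ and $W$, satisfies $N\mathbf H\simeq \Map_{_{/BG}}(X,W)$. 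It therefore suffices to prove that the inclusion $\K_n^X(A)\hookrightarrow\mathbf H$ induces a weak equivalence of nerves.

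I would package this comparison via the category $\T(X)$ of trivial fibrations $Y\rar X$ over $BG$ (morphisms: maps over $X$). The assignment $Y\mapsto \Hom_{_{/BG}}(Y,W)$ is a presheaf of sets on $\T(X)$ whose category of elements is exactly $\mathbf H$, so Thomason's theorem gives that $N\mathbf H$ is the homotopy colimit of this presheaf. Writing $\T_\tau(X)\subseteq\T(X)$ for the full subcategory of those $Y$ lying in $\tau_n\SS_{\red}^{\geq 1}$, the category of elements of the restricted presheaf is $\K_n^X(A)$, and the whole proposition reduces to the assertion that $\T_\tau(X)\hookrightarrow\T(X)$ is homotopy cofinal. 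By Quillen's Theorem A this amounts to the contractibility, for each $Y_0\rar X$ in $\T(X)$, of the comma category of reduced $(n+1)$-coskeletal trivial fibrations $Y\rar X$ equipped with a map $Y\rar Y_0$ over $X$. This comma category is nonempty: the underlying trivial fibration of simplicial sets $Y_0\rar X$ admits a section because every simplicial set is cofibrant, the section is automatically a map over $BG$, and it realises $X$ itself (which lies in $\tau_n\SS_{\red}^{\geq1}$) as an object. For the higher connectivity one uses that $\tau_n\SS_{\red}^{\geq1}$ is stable under pullback — the vertex set of a fibre product of reduced spaces is again a point, and $\cosk_{n+1}$, being a right adjoint, preserves pullbacks — so that common refinements of objects of the comma category can be formed inside $\tau_n\SS_{\red}^{\geq1}$.

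The main obstacle is precisely this last reduction, and it is more delicate than it first appears. One cannot prove cofinality by the naive device of replacing an arbitrary middle object $Y$ by its reduced $(n+1)$-coskeleton: the coskeleton unit $Y\rar\cosk_{n+1}Y$ points \emph{away} from $Y$, and while the leg $Y\rar X$ factors through it (because $X$ is already $(n+1)$-coskeletal), the leg $Y\rar W$ does \emph{not}, since $W=K(A,n+2)//G$ carries essential homotopy in degree $n+2$ and is not $(n+1)$-coskeletal. The point of routing the argument through sections and pullbacks of \emph{trivial fibrations} is exactly to perform the truncation of the middle object without ever having to push the map to $W$ forward along a non-invertible weak equivalence; the one technical subtlety that remains is to keep the refining pullbacks among trivial fibrations, rather than mere weak equivalences, which is arranged by the usual factorisation in a category of fibrant objects, or alternatively deduced directly from the resolution results of \cite{DK84}. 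Granting the cofinality, Thomason's theorem and Low's theorem combine to give $N\K_n^X(A)\simeq N\mathbf H\simeq\Map_{_{/BG}}(X,W)$, and applying $\Ex^\infty$ yields the claimed equivalence of $\infty$-groupoids.
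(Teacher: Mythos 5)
The paper does not actually prove this proposition: it is quoted from \cite{DK84}, with the calculus of right fractions of \cite{Low} invoked in the sentence before it. You are therefore attempting something the authors delegate to the literature, and you correctly isolate the one point that the bare statement of Low's cocycle theorem does not cover, namely the restriction of the middle object $Y$ to $\tau_n\SS_{\red}^{\geq 1}$; your observation that a map $Y\lrar K(A,n+2)//G$ cannot be pushed forward along $Y\lrar \cosk_{n+1}(Y_{\red})$ is exactly right, and the reduction via Thomason's theorem and Quillen's Theorem A to contractibility of the comma categories $\T_\tau(X)\downarrow Y_0$ is a reasonable way to organise the comparison.

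The gap is that this contractibility is asserted rather than proved, and the one tool you offer for it does not apply. Stability of $\tau_n\SS_{\red}^{\geq 1}$ under pullback holds for pullbacks over a base that itself lies in $\tau_n\SS_{\red}^{\geq 1}$, but a common refinement of two objects $(Y_1,g_1)$, $(Y_2,g_2)$ of the comma category must be formed over $Y_0$, which is an arbitrary trivial fibration over $X$ and is not $(n+1)$-coskeletal, so $Y_1\times_{Y_0}Y_2$ (or its fibrant replacement) leaves $\tau_n\SS_{\red}^{\geq 1}$. Repairing this by applying $\cosk_{n+1}((-)_{\red})$ afterwards recreates, one level down, exactly the problem you diagnosed for $W$: the legs to the coskeletal objects $Y_1,Y_2$ extend over the coskeleton, but the map to the non-coskeletal $Y_0$ does not extend uniquely, and the two composites through $Y_1$ and through $Y_2$ need not agree in degrees above $n+1$, so the refined object fails to map to both $(Y_1,g_1)$ and $(Y_2,g_2)$ in the comma category. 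Moreover, even granting common refinements of pairs of objects, that only yields connectivity of the comma category; contractibility also requires handling parallel pairs of morphisms (homotopy cofilteredness) or a different criterion altogether. To close the gap you would need either to compare $\K_n^X(A)$ with the explicit resolution categories of \cite{DK84}, or to exploit that the \emph{space} (not merely the set) of lifts of $Y\lrar X$ through the trivial fibration $Y_0\twoheadrightarrow X$ is contractible, which the set-valued presheaf argument as written does not see.
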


A map of $G$-modules $A\lrar B$ gives rise to a map $$K(A,n+2)//G\lrar K(B,n+2)//G$$ (over $BG$) and thus the construction of $\K_n^X(A)$ extends to a functor $$\K_n^X:\G-Mod\lrar \RelCat$$ from the category of $G$-modules. 
\begin{defn}
The relative category of \textbf{$\textbf{n}^{\textbf{th}}$-level $\textbf{k}$-invariants of $\textbf{X}$} is the Grothendieck construction $\K_n(X):=\int_{A\in \G-Mod}\K_n^X(A)$. 
\end{defn}

For a space $X\in \tau_{ n}\SS_{\red}^{\geq 1}$ and a $G$-module $A$ as above, we also define: 

\begin{defn}\label{d: A-extension}
The category $\F^X(A)$ has as objects the triples $(X',\eta,\alpha)$ where $X'\in\tau_{ n+1}\SS_{\red}^{\geq 1}$, $\eta:X'\twoheadrightarrow X$ a fibration, $\alpha:\pi_{n+1}X'\cong A$ is an isomorphism of $G$-modules and such that the fibration $$X'[n]\overset{\simeq}{\twoheadrightarrow} X[n]\cong X$$ is a weak equivalence. A morphism is a commutative triangle of the form 

$$\xymatrix{X'\ar@{->>}[dr]\ar[rr] && X''\ar@{->>}[dl] \\ & X &}$$
suitably compatible with $\alpha$.
\end{defn}

\begin{rem}
Note that the map $X'[n]\lrar X[n]\cong X$ is automatically a fibration since $\cosk_{n+1}$ preserves Kan fibrations between Kan complexes if the codomain is $n$-truncated.
\end{rem}

Recall from \cite{DK84} that for any $X'\in \tau_{ n+1}\SS_{\red}^{\geq 1}$ with $G'=\pi_1(X')$ and $A'=\pi_{n+1}X'$ we have a canonical homotopy Cartesian square (over $BG$)
\begin{equation}\label{e:canonical}
\xymatrix{
X'\hpbc \ar[d]\ar[r]& X'[n]\ar[d]^-{k_{X'}}\\
BG'\ar[r] & K(A',n+2)//G'.
}
\end{equation}

The square (\ref{e:canonical}) the \textbf{\boldmath$n^\text{th}$-level canonical square} of $X'$.

Fixing the space $X\in \tau_{ n}\SS_{\red}^{\geq 1}$, Definition~\ref{d: A-extension} extends to a functor $$\F^X:\G-Mod\lrar \RelCat$$ as follows. If $f:A\lrar B$ is a map of $G$-modules, and $X'\twoheadrightarrow X$ is an object of $\F^X(A)$, we first extend the $n^{\text{th}}$-level canonical square of $X'$ to

$$\xymatrix{X'\hpbc\ar[r]\ar[d] & X'[n]\ar[d]\ar@{=}[rr] && X'[n]\ar[d]\\ BG'\ar[d]^{\simeq}\ar[r] &K(A',n+2)//G'\ar[rr]^{\alpha_*} && K(A,n+2)//G\ar[d]\\ \bar{BG'}\ar@{->>}[rrr] &&& K(B,n+2)//G,}$$ where we replaced the map $BG'\lrar K(B,n+2)//G$ by a fibration. 

We form the pullback 

$$\xymatrix@R=2em@C=2em{Y'_0\pbc\ar@{->>}[r]\ar[d] & X'[n]\ar[d]\\ \bar{BG'}\ar@{->>}[r] & K(B,n+2)//G,}$$ 

and define the space $f_!(X')=Y'$ to be $Y'_0[n+1]_{\red}$ with the evident fibration $$Y'\twoheadrightarrow X'[n]\twoheadrightarrow X$$ (note that $Y'_0[n+1]\lrar X[n]$ is a fibration which is sujective on $\pi_1$). 

Clearly,
$Y'\twoheadrightarrow X$ is an object in $\F^X(B)$ and we obtain a relative functor $$\F^X(A)\lrar \F^X(B).$$  

\begin{defn} 
The relative category of \textbf{\boldmath$(n+1)$-Postnikov extensions} of $X$ is the Grothendieck construction $\F(X):=\int_{A}\F^X(A)$.
\end{defn}

To define a relative functor $\K_n^X(A)\lrar \F^X(A)$, we first replacing the map $BG\lrar K(A,n+2)//G$ by a fibration. Then, for an object $$\xymatrix{X & Y\ar@{->>}[l]_\simeq\ar[r] & K(A,n+2)//G}$$ we form the pullback 
$$\xymatrix{Y_0'\pbc\ar[r]\ar@{->>}[d] & \bar{BG}\ar@{->>}[d]\\ Y\ar[r] & K(A,n+2)//G,}$$ with the evident map $Y_0'\twoheadrightarrow X$. The image of the object in $\F^X(A)$ is then the resulting map $Y':=Y_0'[n+1]_{\red}\twoheadrightarrow X$. 

The definition above combined with Theorem~\ref{t:hin} yields: 
\begin{pro}\label{p:coCart}
The induced map of $\infty$-categories $$\xymatrix@C=1em@R=1em{\K_n(X)_\infty\ar[rr]\ar[dr] && \F(X)_\infty\ar[dl] \\ & \G-Mod}$$ is a map of coCartesian fibrations. 
\end{pro}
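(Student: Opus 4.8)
The plan is to deduce the statement formally from Theorem~\ref{t:hin}, once one knows that the fibrewise functors $\K_n^X(A)\lrar \F^X(A)$ are natural in the module $A$. The guiding principle is that a natural transformation between relative functors valued in $\RelCat$ induces, after applying $(-)_\infty$ and Theorem~\ref{t:hin}, a morphism between the associated coCartesian fibrations; so essentially all of the content lies in establishing naturality.

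First I would verify that the assignment $A\mapsto \big(\K_n^X(A)\lrar \F^X(A)\big)$ defines a relative natural transformation $\K_n^X\Rightarrow \F^X$ of functors $\G-Mod\lrar \RelCat$. Concretely, given a map $f:A\lrar B$ of $G$-modules one must compare the two composites from $\K_n^X(A)$ to $\F^X(B)$: first apply the fibrewise map and then the pushforward $f_!$ built on the $\F^X$-side, versus first apply the pushforward on the $k$-invariant side and then the fibrewise map. Both routes are assembled from the same operations --- replacing a map by a fibration, forming a pullback over $K(-,n+2)//G$, and applying $(-)[n+1]_{\red}$ --- so their comparison comes down to the commutation of iterated pullbacks together with the fact that $(-)[n+1]$ and $(-)_{\red}$ interact with these pullbacks up to the weak equivalences already built into $\K_n^X$ and $\F^X$. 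This bookkeeping is the one genuinely nontrivial step; everything afterward is formal.

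Granting naturality, the Grothendieck construction produces the map $\K_n(X)\lrar \F(X)$ over $\G-Mod$ of the statement. Applying Theorem~\ref{t:hin} to each of $\K_n^X$ and $\F^X$ gives canonical equivalences over $(\G-Mod)_\infty$
$$\K_n(X)_\infty\overset{\simeq}{\lrar}\int_{(\G-Mod)_\infty}(\K_n^X)_\infty,\qquad \F(X)_\infty\overset{\simeq}{\lrar}\int_{(\G-Mod)_\infty}(\F^X)_\infty,$$
exhibiting both $\K_n(X)_\infty$ and $\F(X)_\infty$ as the coCartesian fibrations classified by the functors $(\K_n^X)_\infty$ and $(\F^X)_\infty$ into $\Cat_\infty$. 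By the canonicity of these equivalences, the map $\K_n(X)_\infty\lrar \F(X)_\infty$ is identified with the map of Grothendieck constructions induced by the natural transformation $(\K_n^X)_\infty\Rightarrow (\F^X)_\infty$.

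Finally I would invoke straightening/unstraightening (\cite{Lur09}): under the equivalence between $\Fun((\G-Mod)_\infty,\Cat_\infty)$ and coCartesian fibrations over $(\G-Mod)_\infty$, a natural transformation of classifying functors corresponds exactly to a functor over the base preserving coCartesian edges, i.e. a morphism of coCartesian fibrations. Since our map is of this form, it preserves coCartesian edges, which is precisely the assertion; unwound, this reflects the fact that the fibrewise functor sends the coCartesian lift of $f$ at an object over $A$ to the coCartesian lift at its image, which is again the content of the naturality square from the second step. The main obstacle is therefore that naturality check, with everything else following formally from Theorem~\ref{t:hin}.
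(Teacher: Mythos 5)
Your proposal follows essentially the same route as the paper, which likewise deduces the statement directly from the fibrewise construction of $\K_n^X(A)\lrar \F^X(A)$ together with Theorem~\ref{t:hin}; you merely make explicit the naturality check in $A$ and the straightening/unstraightening step that the paper leaves implicit. This is correct and matches the intended argument.
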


Note that the $n^{\text{th}}$-level canonical pullback defines a relative functor $\F^X(A)\lrar \K_n^X(A)$ which is a weak inverse to $\K_n^X(A)\lrar \F^X(A)$. These functors are thus equivalences of relative categories and we obtain:

\begin{cor}\label{c:coCart}
The map $\K_n(X)_\infty \lrar \F(X)_\infty$ is an equivalence of $\infty$-categories.
\end{cor}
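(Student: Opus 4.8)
The plan is to deduce the corollary from Proposition~\ref{p:coCart} by reducing an equivalence of total $\infty$-categories to an equivalence on fibers. The relevant input is the standard fact that a map between coCartesian fibrations over a common base which preserves coCartesian edges is a categorical equivalence if and only if it restricts to an equivalence on each fiber (see \cite{Lur09}, \S 3.3.1). Proposition~\ref{p:coCart} already presents $\K_n(X)_\infty\lrar \F(X)_\infty$ as a map of coCartesian fibrations over $(\G-Mod)_\infty$, so it will suffice to analyse the induced map on fibers over each $G$-module $A$.

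First I would identify the fibers. Applying Theorem~\ref{t:hin} to both Grothendieck constructions identifies $\K_n(X)_\infty$ and $\F(X)_\infty$, compatibly over $(\G-Mod)_\infty$, with $\int_{(\G-Mod)_\infty}(\K_n^X)_\infty$ and $\int_{(\G-Mod)_\infty}(\F^X)_\infty$ respectively. Under these identifications the fiber over $A$ becomes $\K_n^X(A)_\infty$ on the source and $\F^X(A)_\infty$ on the target, and the induced map of fibers is the $\infty$-categorical realization of the pointwise relative functor $\K_n^X(A)\lrar \F^X(A)$ used to assemble the map of Grothendieck constructions.

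Next I would invoke the remark preceding the corollary: the $n^{\text{th}}$-level canonical square furnishes a relative functor $\F^X(A)\lrar \K_n^X(A)$ together with natural weak equivalences exhibiting it as a weak inverse to $\K_n^X(A)\lrar \F^X(A)$. By the discussion of weak inverses in the preliminaries, each of these relative functors is therefore an equivalence of relative categories, and hence—by the very definition of equivalence of relative categories—induces an equivalence $\K_n^X(A)_\infty\overset{\simeq}{\lrar}\F^X(A)_\infty$ of the underlying $\infty$-categories. Thus the map of coCartesian fibrations is a fiberwise equivalence, and the criterion above forces $\K_n(X)_\infty\lrar \F(X)_\infty$ to be a categorical equivalence.

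The step I expect to require the most care is not the weak-inverse statement (already recorded) but the bookkeeping that, under the equivalence of Theorem~\ref{t:hin}, the map on the fiber over $A$ genuinely agrees with the realization $(\K_n^X(A)\lrar\F^X(A))_\infty$. Concretely one must check that the fiberwise data extracted from the map of Grothendieck constructions matches the pointwise relative functors, which amounts to verifying that the pullback construction and the canonical-square construction are natural in $A$ and that Theorem~\ref{t:hin} is compatible with passing to fibers. Once this compatibility is pinned down, the remainder is the formal combination of the fiberwise-equivalence criterion with the equivalence of relative categories on each fiber.
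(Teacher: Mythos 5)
Your proposal is correct and follows essentially the same route as the paper: the paper deduces the corollary from Proposition~\ref{p:coCart} together with the observation that the canonical square provides a weak inverse $\F^X(A)\lrar\K_n^X(A)$, so that the map of coCartesian fibrations is a fiberwise equivalence and hence an equivalence of total $\infty$-categories. Your additional care about identifying the fibers via Theorem~\ref{t:hin} simply makes explicit what the paper leaves implicit.
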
   

%

We now wish to exhibit the map $\K_n(X)_\infty\lrar \F(X)_\infty$ as an induced map on fibers of \emph{Cartesian} fibrations over $\left(\SS^{\geq 1,\leq n}_*\right)_{\infty}$. 
Let us start by defining the functor $$\K_n:\left(\tau_n\SS^{\geq 1}_{\red}\right)^{\op}\lrar \RelCat.$$ If $X_0\lrar X_1$ is a map in $\tau_n\SS^{\geq 1}_{\red}$, denote $G_0=\pi_1(X_0),\; G_1=\pi_1(X_1)$ and define a relative functor $\K_n(X_1)\lrar \K_n(X_0)$ by sending an object $$\xymatrix{X_1 & Y_1\ar@{->>}[l]_\simeq\ar[r] & K(A,n+2)//G_1}$$ to the pullback $$\xymatrix{Y_0\pbc\ar@{->>}[r]^\simeq\ar[d] & X_0\ar[d]\\ Y_1\ar@{->>}[r]^\simeq & X_1.}$$ Note that $Y_0\twoheadrightarrow X_0$ is indeed an object of $\K_n(X_0)$ since we have a map of co-spans

$$\left(Y_1\lrar X_1 \llar X_0\right)\Rightarrow\left(K(A,n+2)//G_1\lrar BG_1 \llar BG_0\right)$$

Similarly, we define a functor $\F:\left(\tau_n\SS^{\geq 1}_{\red}\right)^{\op}\lrar \RelCat$ by sending a map $X_0\lrar X_1$ to the relative functor $\F(X_1)\lrar \F(X_0)$ which takes an object $X'_1\twoheadrightarrow X_1$ to the pullback $$\xymatrix{X'_0\pbc\ar[r]\ar@{->>}[d] & X'_1\ar@{->>}[d]\\ Y_1\ar[r] & X_1}.$$  
The dual of Theorem~\ref{t:hin} yields:
\begin{pro}\label{p:Cart}
The induced map of $\infty$-categories $$\xymatrix@R=1em@C=1em{\left(\int_{X}\K_n(X)\right)_\infty\ar[rr]\ar[dr] && \left(\int_X\F(X)\right)_\infty\ar[dl] \\ &\left(\tau_n\SS^{\geq 1}_{\red}\right)_{\infty}\simeq \left( \SS^{\geq 1,\leq n}_*\right)_\infty}$$ is a map of Cartesian fibrations.
\end{pro}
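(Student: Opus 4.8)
The plan is to invoke the dual form of Theorem~\ref{t:hin} and reduce the statement entirely to verifying its hypotheses for the two contravariant functors $\K_n$ and $\F$ from $\left(\tau_n\SS^{\geq 1}_{\red}\right)^{\op}$ to $\RelCat$ already constructed above. Since Theorem~\ref{t:hin} was stated for \emph{covariant} relative functors $\F:\C\lrar\RelCat$, producing coCartesian fibrations, its dual applies to a relative functor out of $\C^{\op}$ and produces a \emph{Cartesian} fibration $\left(\int_X\F\right)_\infty\lrar \C_\infty$ together with a canonical equivalence to the $\infty$-categorical Grothendieck construction $\int_{\C_\infty}\F_\infty$. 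Applying this twice, to $\K_n$ and to $\F$, gives that both $\left(\int_X\K_n(X)\right)_\infty$ and $\left(\int_X\F(X)\right)_\infty$ are Cartesian fibrations over $\left(\tau_n\SS^{\geq 1}_{\red}\right)_\infty$, which by Observation~\ref{o:reduced} is equivalent to $\left(\SS^{\geq 1,\leq n}_*\right)_\infty$.

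Next I would assemble the map between the two fibrations. The fiberwise relative functors $\K_n^X(A)\lrar \F^X(A)$ defined earlier (via the pullback against $\bar{BG}\thrar K(A,n+2)//G$ followed by $(-)[n+1]_{\red}$) organise, as $X$ and $A$ vary, into a single relative natural transformation $\K_n\Rightarrow \F$ of functors on $\left(\tau_n\SS^{\geq 1}_{\red}\right)^{\op}$. The essential compatibility to check is that this fiberwise construction commutes, up to the required weak equivalence, with the two restriction functors induced by a map $X_0\lrar X_1$: both restriction maps are given by pullback along $X_0\lrar X_1$, and the coskeletal-reduction $(-)[n+1]_{\red}$ must be shown to be compatible with these pullbacks. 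Granting this, the transformation induces a map of Grothendieck constructions over $\left(\tau_n\SS^{\geq 1}_{\red}\right)^{\op}$, and passing to $\infty$-categories via the two instances of (the dual of) Theorem~\ref{t:hin} yields the commuting triangle asserted in the proposition, the map being fibration-preserving because both downward legs are the canonical projections.

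The main obstacle I anticipate is the bookkeeping needed to confirm that both $\K_n$ and $\F$ genuinely satisfy the hypotheses of Hinich's criterion in its dual (Cartesian) form, rather than the construction of the transformation itself. Concretely, one must verify that for each morphism $X_0\lrar X_1$ the restriction functors $\K_n(X_1)\lrar \K_n(X_0)$ and $\F(X_1)\lrar \F(X_0)$ are relative functors (preserve weak equivalences) and that the required homotopy-coherence conditions of \cite[Proposition 2.1.4]{Hin} hold; here the delicate point is that the relevant squares are pullbacks of fibrations over the various $BG_i$, so one relies on right-properness and on $(-)[n+1]_{\red}$ preserving the relevant fibrations and weak equivalences. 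Once these hypotheses are in place, the remainder is formal: Theorem~\ref{t:hin} converts everything to the $\infty$-categorical Grothendieck construction, where a natural transformation automatically induces a map of Cartesian fibrations.
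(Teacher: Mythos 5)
Your proposal is correct and follows essentially the same route as the paper, which deduces the proposition directly from the dual of Theorem~\ref{t:hin} applied to the contravariant relative functors $\K_n$ and $\F$ constructed just beforehand. Your additional remarks on verifying Hinich's hypotheses and assembling the fiberwise maps into a natural transformation only make explicit what the paper leaves implicit.
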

%
Corollary~\ref{c:coCart} yields:
\begin{cor}\label{c:Cart}
The map $\left(\int_{X}\K_n(X)\right)_\infty\lrar \left(\int_X\F(X)\right)_\infty$ is an equivalence of $\infty$ categories.
\end{cor}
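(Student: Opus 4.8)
The statement to prove is Corollary~\ref{c:Cart}: that the map
$$\left(\int_{X}\K_n(X)\right)_\infty\lrar \left(\int_X\F(X)\right)_\infty$$
is an equivalence of $\infty$-categories. The key observation is that both sides are Cartesian fibrations over $\left(\tau_n\SS^{\geq 1}_{\red}\right)_\infty\simeq\left(\SS^{\geq 1,\leq n}_*\right)_\infty$ by Proposition~\ref{p:Cart}, and the displayed map is a map of Cartesian fibrations over this base. So the strategy is the standard fiberwise criterion: a map of Cartesian fibrations over a common base is an equivalence if and only if it induces an equivalence on each fiber.

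Let me think about how Corollary~\ref{c:coCart} feeds in. The fiber of $\int_X\K_n(X)$ over a fixed $X$ is exactly $\K_n(X)_\infty$, and similarly the fiber of $\int_X\F(X)$ over $X$ is $\F(X)_\infty$. The map of total spaces restricts on fibers to the map $\K_n(X)_\infty\lrar\F(X)_\infty$. But Corollary~\ref{c:coCart} already established that this fiberwise map is an equivalence of $\infty$-categories for each $X$! So the whole proof reduces to: (1) identify the fibers correctly, (2) invoke the fiberwise equivalence criterion for Cartesian fibrations.

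So my plan would be: First, recall the general fact (from Lurie, HTT 3.3.1.5 or 2.4.4.2, or equivalently the fact that a Cartesian-fibration-preserving map over a base $\mathcal{B}$ is an equivalence iff it is a fiberwise equivalence). Then, identify the fibers over an object $X\in\left(\tau_n\SS^{\geq 1}_{\red}\right)_\infty$: by the Grothendieck construction for the Cartesian case, the fiber of $\left(\int_X\K_n(X)\right)_\infty$ over $X$ is canonically equivalent to $\K_n(X)_\infty$, and similarly for $\F$. This identification uses Theorem~\ref{t:hin} (its Cartesian dual, already invoked in Proposition~\ref{p:Cart}) to pass between the relative-categorical Grothendieck construction and the $\infty$-categorical one. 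Finally, apply Corollary~\ref{c:coCart}, which states precisely that the induced fiberwise map $\K_n(X)_\infty\lrar\F(X)_\infty$ is an equivalence for every $X$. Since the map is a map of Cartesian fibrations over the base and is a fiberwise equivalence, it is an equivalence.

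The main obstacle, such as it is, lies in correctly identifying the fibers — one must verify that the fiber of the $\infty$-categorical Grothendieck construction $\left(\int_X\K_n(X)\right)_\infty$ over a point $X$ agrees with $\K_n(X)_\infty$, and not merely with some homotopy-invariant avatar that might differ. This is where the compatibility established in Theorem~\ref{t:hin}, identifying $\left(\int_\C\F\right)_\infty$ with $\int_{\C_\infty}\F_\infty$ over $\C_\infty$, does the essential work: the fibers of an $\infty$-categorical Grothendieck construction $\int_{\C_\infty}\F_\infty$ are by definition the values $\F_\infty$ of the classifying functor, here $\K_n(X)_\infty$ and $\F(X)_\infty$ respectively. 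Once the fibers are matched and the fiberwise map is recognized as the one from Corollary~\ref{c:coCart}, the conclusion is immediate from the fiberwise equivalence criterion, so there is no serious computation remaining — the corollary is essentially a bookkeeping repackaging of Corollary~\ref{c:coCart} together with the structural result of Proposition~\ref{p:Cart}.
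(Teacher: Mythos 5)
Your proposal is correct and matches the paper's argument: the paper deduces the corollary directly from Proposition~\ref{p:Cart} and Corollary~\ref{c:coCart}, i.e.\ exactly the fiberwise-equivalence criterion for a map of Cartesian fibrations that you describe. Your additional care in identifying the fibers of $\left(\int_X\K_n(X)\right)_\infty$ with $\K_n(X)_\infty$ via Theorem~\ref{t:hin} is a reasonable elaboration of a step the paper leaves implicit.
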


We will denote by $$\K[n]:=\int_X\K_n(X)$$ the relative category defined above. Thus, an object in $\K[n]$ is an $(n+1)$-coskeletal reduced space $X$, together with a span $$\xymatrix{X & Y\ar@{->>}[l]_\simeq\ar[r] & K(A,n+2)//G}$$ where $G=\pi_1X$ and $A$ is a $G$-module. We refer to $\K[n]$ as the category of \textbf{\boldmath$n^{\text{th}}$-level $k$-invariants}.  
We are now ready to prove:

\begin{thm}\label{t:k-invariants}
There is an equivalence of $\infty$-categories $$\K[n]_\infty\simeq \left(\SS^{\geq 1,\leq n+1}_*\right)_{\infty}$$ 
\end{thm}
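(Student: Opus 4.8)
The plan is to realise the claimed equivalence $\K[n]_\infty\simeq \left(\SS^{\geq 1,\leq n+1}_*\right)_{\infty}$ by exhibiting a functor that sends an $(n+1)$-level $k$-invariant to the total space of its classified Postnikov extension, and to prove it is an equivalence by a fibrewise argument over the $\infty$-category of $n$-truncated spaces. Concretely, I would first invoke Corollary~\ref{c:Cart}, which already identifies $\left(\int_X\K_n(X)\right)_\infty$ with $\left(\int_X\F(X)\right)_\infty$ as Cartesian fibrations over $\left(\SS^{\geq 1,\leq n}_*\right)_\infty$. Since $\K[n]=\int_X\K_n(X)$ by definition, it therefore suffices to prove that the total space $\left(\int_X\F(X)\right)_\infty$ of $(n+1)$-Postnikov extensions is equivalent to $\left(\SS^{\geq 1,\leq n+1}_*\right)_\infty$. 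This reduction is the organisational heart of the section: all the technical scaffolding (Theorem~\ref{t:hin}, Propositions~\ref{p:coCart} and~\ref{p:Cart}) was built precisely so that this final identification can be made on the $\F$-side, where the objects are genuine spaces rather than spans into $K(A,n+2)//G$.

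Next I would construct the comparison functor directly at the level of relative categories. There is an evident relative functor
$$
\mathrm{tot}:\int_X\F(X)\lrar \tau_{n+1}\SS^{\geq 1}_{\red}
$$
sending a triple $(X,X'\twoheadrightarrow X,\alpha)$ to the total space $X'$, and (using Observation~\ref{o:reduced}) the target is equivalent to $\SS^{\geq 1,\leq n+1}_*$. Conversely, given any $X'\in\tau_{n+1}\SS^{\geq 1}_{\red}$, its Postnikov truncation $X'[n]=X$ together with the canonical fibration $X'\twoheadrightarrow X$, the module $A=\pi_{n+1}X'$, and the identity isomorphism $\alpha$ furnishes an object of $\int_X\F(X)$ living over $X$. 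The key observation is that this assignment is essentially inverse to $\mathrm{tot}$: the defining condition in Definition~\ref{d: A-extension} that $X'[n]\overset{\simeq}{\twoheadrightarrow}X$ be a weak equivalence says exactly that the base $X$ recovers the $n$-truncation of the total space, so the two constructions compose to the identity up to canonical weak equivalence.

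The most delicate step will be verifying that $\mathrm{tot}$ induces an equivalence on the underlying $\infty$-categories, rather than merely on homotopy categories or on objects. Here I would pass to the fibrewise picture. Over a fixed $X\in\tau_n\SS^{\geq 1}_{\red}$, the fibre of $\int_X\F(X)$ is the Grothendieck construction $\F(X)=\int_{A\in\G-Mod}\F^X(A)$, and I must identify the $\infty$-groupoid of $n$-truncated-$X$-preserving extensions of $X$ with fixed $(n+1)$st homotopy $A$ with the correct mapping space. This is where the canonical square~(\ref{e:canonical}) and the classical computation of \cite{DK84} enter: the $n^{\text{th}}$-level canonical square exhibits every $X'\in\F^X(A)$ as a homotopy pullback of its own $k$-invariant, so $\F^X(A)$ is equivalent to $\K_n^X(A)$, which by the Proposition attributed to \cite{DK84} models $\Map_{/BG}(X,K(A,n+2)//G)$. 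Summing over $A\in\G-Mod$ and comparing with the fibres of the truncation map $\left(\SS^{\geq 1,\leq n+1}_*\right)_\infty\lrar\left(\SS^{\geq 1,\leq n}_*\right)_\infty$—whose fibre over $X$ is precisely the space of Postnikov extensions classified by such $k$-invariants—gives a fibrewise equivalence.

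Finally I would assemble the fibrewise statement into a global one. Both $\mathrm{tot}$'s source and target sit as Cartesian fibrations over $\left(\SS^{\geq 1,\leq n}_*\right)_\infty$: on the source side this is Proposition~\ref{p:Cart}, and on the target side the Postnikov truncation $X'\mapsto X'[n]$ is a Cartesian fibration of $\infty$-categories whose fibres are the extension spaces just analysed. Since $\mathrm{tot}$ is a map of Cartesian fibrations over the same base that induces an equivalence on every fibre and is an equivalence on the base, a standard criterion (a map of Cartesian fibrations inducing fibrewise equivalences over an equivalence of bases is itself an equivalence) yields that $\mathrm{tot}$ is an equivalence of $\infty$-categories. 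Combining this with the reduction $\K[n]_\infty\simeq\left(\int_X\F(X)\right)_\infty$ from Corollary~\ref{c:Cart} completes the proof. I expect the main obstacle to be the careful bookkeeping in the fibrewise identification—ensuring that the $\G-Mod$-indexing, the reduction-and-coskeleton functors $(-)[n+1]_{\red}$, and the weak-equivalence condition of Definition~\ref{d: A-extension} all cohere so that the fibre of $\mathrm{tot}$ over $X$ genuinely matches the Dwyer–Kan mapping space, rather than introducing a spurious shift or a failure of essential surjectivity.
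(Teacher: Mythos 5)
Your overall skeleton matches the paper's: both reduce via Corollary~\ref{c:Cart} to identifying $\left(\int_X\F(X)\right)_\infty$ with $\left(\SS^{\geq 1,\leq n+1}_*\right)_\infty$, and both use the total-space projection together with $Y\mapsto\left(Y[n+1]_{\red}\twoheadrightarrow Y[n]_{\red}\right)$ as the comparison. Where you diverge is in how the last identification is certified. The paper's proof is a one-liner at this point: the two relative functors are \emph{weak inverses} in the sense of \S\ref{s:Pre} (the composites are connected to the identities by component-wise weak equivalences, using exactly the condition in Definition~\ref{d: A-extension} that $X'[n]\twoheadrightarrow X$ be an equivalence), and a weak inverse pair of relative functors automatically induces an equivalence of underlying $\infty$-categories; no fibrewise analysis is needed. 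Your alternative --- viewing both sides as Cartesian fibrations over $\left(\SS^{\geq 1,\leq n}_*\right)_\infty$ and checking fibrewise --- can be made to work, but as written it is partly redundant and partly circular: the Dwyer--Kan identification of $\F^X(A)$ with $\K_n^X(A)$ and with $\Map_{/BG}(X,K(A,n+2)//G)$ is the content of Corollary~\ref{c:coCart}, which has already been consumed in Corollary~\ref{c:Cart} and plays no role in comparing $\int_X\F(X)$ with $\tau_{n+1}\SS^{\geq 1}_{\red}$; and describing the fibre of the truncation functor over $X$ as ``the space of Postnikov extensions classified by such $k$-invariants'' presupposes the very classification the theorem is establishing. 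What that fibrewise step actually requires is the more elementary observation that an $(n+1)$-truncated reduced $X'$ with $X'[n]\simeq X$ determines an object of $\F(X)$ up to contractible choices (the pair $(A,\alpha)$ and the fibrant replacement of $X'\lrar X$ being essentially unique), which is precisely what the paper's explicit weak inverse encodes without any appeal to the Cartesian-fibration machinery. So: correct destination, same main reduction, but your final leg trades the paper's short direct argument for a heavier one that should be pruned of the Dwyer--Kan input to avoid circularity.
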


\begin{proof}
The projection $$\int_X \F(X)\lrar \tau_{n+1}\SS^{\geq 1}_{\red}$$ admits a weak inverse $$\tau_{n+1}\SS^{\geq 1}_{\red}\lrar \int_X \F(X)$$ which is given by sending an object $Y$ to the map $Y[n+1]_{\red}\twoheadrightarrow  Y[n]_{\red}$. 
The result now follows from Corollary~\ref{c:Cart} and Observation~\ref{o:reduced}.
%
\end{proof}

\section{Sylow theorems for $\infty$-groups}\label{s:Sylow}

Recall that an $\infty$-group is a pointed connected space that is, from our viewpoint, a homotopical analogue of a group. 
In this section we will study homotopical analogues of \textbf{finite} groups and parallel the Sylow theorems. 
\subsection{Basic notions}
\begin{define}\label{d:finite}\
\begin{enumerate}
\item An $\infty$-group $\GG$ will be called \textbf{finite} if all homotopy groups are finite.
We denote the relative category of such spaces by $\Fin$.
\item We say that $\GG \in \Fin$ is \textbf{\boldmath$n$-truncated} if $\pi_i(\GG)=0$ for $i>n$.

\end{enumerate}
\end{define}

\begin{defn}
Let $p$ be a prime. We say that $\GG \in \Fin$ is a \textbf{\boldmath$p$-\boldmath$\infty$-group} if all its homotopy groups are $p$-groups.
\end{defn}  

Henceforth, we \textbf{fix a prime $\mathbf{p}$}.
Let us first recall the (ordinary) group theoretic case. Recall that a $p$-Sylow subgroup of $G$ is a (possibly trivial) maximal $p$-subgroup of $G$. The Sylow theorems assert:
\begin{thm}[Sylow]\label{t:discrete syl}
Let $G$ be a finite group.
\begin{enumerate}
\item
For every $p$-group $H$ and a map $H\lrar G$, there is a $p$-Sylow subgroup $P_p\leq G$ and a factorisation $$\xymatrix@C=1em@R=1em{H\ar[rr]\ar[dr] && G\\ & P_p\ar[ur] &.}$$
\item 
The number of $p$-Sylow subgroups of $G$ is congruent to $1\;(mod\;p)$.
\item
All $p$-Sylow subgroups are conjugate to each other.
\end{enumerate}
\end{thm}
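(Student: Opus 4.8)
The plan is to deduce all three statements from a single counting principle: if a finite $p$-group $P$ acts on a finite set $X$, then every non-trivial orbit has size divisible by $p$, so that $|X|\equiv |X^P|\;(\mathrm{mod}\;p)$, where $X^P$ denotes the set of fixed points. Writing $|G|=p^a m$ with $p\nmid m$, the whole argument reduces to producing subgroups of order exactly $p^a$ and then comparing them via well-chosen actions. This is precisely the finite-group input whose homotopical refinement reappears as the Burnside-type fixed-point statement in $\S$~\ref{s:applications}.

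First I would establish existence by Wielandt's argument. Let $\Omega$ be the collection of all $p^a$-element subsets of the underlying set of $G$, and let $G$ act on $\Omega$ by left translation. A short computation with binomial coefficients shows $p\nmid|\Omega|=\binom{p^a m}{p^a}$, so some $G$-orbit $\mathcal{O}\subseteq\Omega$ has size prime to $p$. For $S\in\mathcal{O}$ the stabiliser $\Stab(S)$ then has index prime to $p$, whence $p^a\mid|\Stab(S)|$; on the other hand $\Stab(S)$ acts freely on $S$ by left translation, so $|\Stab(S)|\leq|S|=p^a$. Hence $|\Stab(S)|=p^a$, and $\Stab(S)$ is a subgroup of order $p^a$, in particular a maximal $p$-subgroup. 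The divisibility $p\nmid\binom{p^a m}{p^a}$ is the one genuinely arithmetic point and I regard it as the main obstacle; it follows from Kummer's (or Lucas's) theorem, or directly from the congruence $\binom{p^a m}{p^a}\equiv m\;(\mathrm{mod}\;p)$.

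Next, both the factorisation of part (1) and the conjugacy of part (3) follow from one observation. Let $P\leq G$ be a subgroup of order $p^a$ as just produced, and let $H$ be any $p$-group equipped with a map $H\lrar G$; replacing $H$ by its image we may assume $H\leq G$ is a $p$-subgroup. Let $H$ act on the coset space $G/P$ by left translation. Since $|G/P|=m$ is prime to $p$, the counting principle gives $|(G/P)^H|\equiv m\not\equiv 0\;(\mathrm{mod}\;p)$, so there is a fixed coset $gP$; unwinding $HgP=gP$ yields $H\subseteq gPg^{-1}$. This simultaneously shows that every maximal $p$-subgroup has order $p^a$ (so $P$ is genuinely $p$-Sylow in the sense defined above) and gives the desired factorisation of $H\lrar G$ through the $p$-Sylow subgroup $gPg^{-1}$. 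Taking $H$ to be a second $p$-Sylow subgroup $Q$ forces $Q\subseteq gPg^{-1}$ with $|Q|=|gPg^{-1}|=p^a$, hence $Q=gPg^{-1}$, which is part (3).

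Finally, for the count in part (2), let $P$ act by conjugation on the set $\mathrm{Syl}_p(G)$ of all $p$-Sylow subgroups (nonempty by the above and, by part (3), a single conjugacy class). A fixed point is a Sylow subgroup $Q$ with $P\leq N_G(Q)$; then $PQ$ is a subgroup in which $Q$ is normal, so $PQ$ is a $p$-group, and maximality of $P$ forces $PQ=P$, i.e.\ $Q\leq P$, whence $Q=P$ by comparing orders. Thus $\mathrm{Syl}_p(G)^P=\{P\}$ is a singleton, and the counting principle applied to this action gives $|\mathrm{Syl}_p(G)|\equiv 1\;(\mathrm{mod}\;p)$, completing the proof.
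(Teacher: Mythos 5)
Your proof is correct and complete: Wielandt's counting argument for existence, the left-translation action on $G/P$ for the factorisation and conjugacy statements, and the conjugation action of a fixed Sylow subgroup on $\mathrm{Syl}_p(G)$ for the congruence are all carried out accurately, and the one arithmetic input ($p\nmid\binom{p^am}{p^a}$) is correctly identified and justified. There is nothing in the paper to compare it against, however: Theorem~\ref{t:discrete syl} is \emph{recalled} as classical background and never proved there; indeed it is an \emph{input} to the paper's main result, since the proof of Theorem~\ref{t:syl} reduces everything along the Postnikov tower to the $1$-truncated case, where the identification of $\mathtt{P}_{f[1]}$ with the set of $p$-Sylow subgroups of $\pi_1(\GG)$ containing the image of $\pi_1(f)$ silently invokes exactly the classical statements (1)--(3). (The paper's later remark that the classical theorems ``can be obtained'' from Theorem~\ref{t:syl} with $\GG=BG$ is therefore not a proof of them but a consistency check.) So a self-contained argument like yours is precisely what is needed to ground the base case; your route is the standard textbook one and is a perfectly good choice. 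One small point of alignment with the paper's conventions: since the paper defines a $p$-Sylow subgroup as a maximal $p$-subgroup (possibly trivial), you should make explicit -- as your coset argument in fact shows -- that every maximal $p$-subgroup has order exactly $p^a$, so that the two definitions agree; you do note this, and it is also what makes the ``maximality forces $PQ=P$'' step in part (2) legitimate.
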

Theorem~\ref{t:syl} below is our analogue of Sylow theorems. 
To parallel Theorem~\ref{t:discrete syl} we first define a notion of a $p$-Sylow ``subgroup" for $\infty$-groups:
\begin{define}
A map $f:\PP\lrar \GG$ of finite $\infty$-groups, is called \textbf{\boldmath$p$-Sylow map} if for all $n\geq 1$, the map
$$\pi_n(f): \pi_n(\PP) \lrar \pi_n(\GG)$$
is an inclusion of a $p$-Sylow subgroup.
  
\end{define}

Let $\aut^h_*\GG$ be the (group-like) simplicial monoid of pointed self-equivalences of $\GG$. If $\GG$ is thought of as a pointed $\infty$-groupoid (with essentially one object), $B\aut^h_*\GG$ is a model for the \textbf{automorphism $\mathbf{\infty}$-group} of $\GG$ at the base object. We view he canonical map $\GG\lrar B\aut^h_*\GG$ as the \textbf{conjugation} map. If $\GG=BG$ is the classifying space of a discrete group then the induced map\\ 
$\pi_1 BG\lrar \pi_1 B\aut^h_*(BG)$ is precisely the conjugation map $\con :G\lrar \aut G$.  
\begin{defn}
A pair $f:\HH\lrar \GG,\;f':\HH'\lrar \GG$ of $\infty$-group maps are \textbf{conjugate} if there is an equivalence $\phi:\HH\overset{\simeq}{\lrar} \HH'$ and an equivalence $\psi:\GG\overset{\simeq}{\lrar} \GG$ such that
$$\xymatrix{\HH\ar[r]^{\phi}_{\simeq}\ar[d]_f & \HH'\ar[d]^{f'}\\ \GG\ar[r]_{\psi}^{\simeq} & \GG}$$ commutes and
 $[\psi]\in \pi_0\aut^h_*\GG$ is in the image of $$\pi_1\GG\lrar \pi_1\left(B\aut^h_*\GG\right)\cong\pi_0\aut^h_*\GG.$$
\end{defn}
\begin{rem}\label{r:conjugate}
Note that if $\GG=BG$ is the classifying space of a discrete group and $f:BH\lrar BG$ and $f':BH'\lrar BG$ are the induced maps of a pair $H,H'$ of subgroups of $G$, then $f$ is conjugate to $f'$ if and only if $H$ is conjugate to $H'$.
\end{rem}
\subsection{Factorisations of a $p$-$\infty$-group map by a $p$-Sylow map} 
Henceforth we fix an $\infty$-group $\GG$, a $p$-$\infty$-group $\HH$ and a map $f:\HH\lrar \GG$.
We denote by $\fact(f)$ the $\infty$-category of factorisations of $f$, given by the pullback of $\infty$-categories
$$\xymatrix{\fact(f)\pbc\ar[r]\ar[d] & \{f\}\ar[d]\\ \Fun\left(\Delta^2, \left(\Fin\right)_{\infty}\right)\ar[r]^{} & \Fun\left(\Delta^{\{0,2\}},\left(\Fin\right)_{\infty}\right)}$$  

When $\HH\simeq *$ is the trivial $\infty$-group and $f:*\lrar \GG$ is the obvious map, $\fact(f)$ is identified with the $\infty$-category $\left(\Fin_{/\GG}\right)_\infty$ of $\infty$-groups over $\GG$.
We denote by $\mathtt{P}_{f}$ the full sub-$\infty$-groupoid of $\fact(f)$ spanned by factorisations of the form
$$\HH \lrar \PP \lrar \GG$$
such that $\PP \lrar \GG$ is a $p$-Sylow map.
Note that the Postnikov functors $\P_n$ give us an infinite tower of $\infty$-categories,

$$\mathtt{P}_{f} \lrar \cdots \lrar \mathtt{P}_{f[n+1]} \lrar   \mathtt{P}_{f[n]} \lrar \cdots \lrar \mathtt{P}_{f[1]}  $$

and that $$\mathtt{P}_{f}\simeq \lim \mathtt{P}_{f[n]}  $$
since $(\SS_*^{\geq 1})_{\infty}\simeq \lim \P_n(\SS^{\geq 1}_*)_{\infty}$.

\begin{pro}\label{p:equivalence of groupoids}
For $n\geq 1$ the map
$$\mathtt{P}_{f[n+1]} \lrar  \mathtt{P}_{f[n]}$$ is an equivalence of $\infty$-groupoids 
 
\end{pro}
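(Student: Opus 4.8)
The statement asserts that passing from the $(n+1)$-st Postnikov truncation to the $n$-th induces an equivalence on the $\infty$-groupoids of $p$-Sylow factorisations. My plan is to exploit the theory of $k$-invariants developed in Section~\ref{s:k-invariants}, which identifies the passage from $(n+1)$-truncated to $n$-truncated spaces with the data of a $k$-invariant living in an appropriate $G$-module. The key observation is that a $p$-Sylow factorisation at level $n+1$ is, up to equivalence, the data of a $p$-Sylow factorisation at level $n$ together with a compatible choice of $k$-invariant data on both $\PP$ and $\GG$. Since $\pi_{n+1}(\PP)$ must be a $p$-Sylow subgroup of $\pi_{n+1}(\GG)$, and $\HH$ is a $p$-$\infty$-group, the factorisation condition at the top level will force the $k$-invariant of $\PP$ to be \emph{uniquely} determined (up to a contractible space of choices) by that of $\GG$.

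\textbf{Key steps.} First I would use Theorem~\ref{t:k-invariants} and Corollary~\ref{c:Cart} to replace the problem of lifting a factorisation from level $n$ to level $n+1$ with the problem of lifting the corresponding $k$-invariant data along the Cartesian fibration $\left(\int_X\K_n(X)\right)_\infty \to \left(\SS^{\geq 1,\leq n}_*\right)_\infty$. Concretely, a factorisation $\HH[n+1]\to\PP[n+1]\to\GG[n+1]$ refining a given $\HH[n]\to\PP[n]\to\GG[n]$ amounts to the following data: a choice of $\pi_{n+1}\GG$-module structure and $k$-invariant $\kappa_\GG$ for $\GG$, a $\pi_1\PP$-module $A_\PP$ with an inclusion $A_\PP\hookrightarrow\pi_{n+1}\GG$ realising it as a $p$-Sylow subgroup, a $k$-invariant $\kappa_\PP$ for $\PP$, and homotopies witnessing commutativity of the two triangles in the factorisation. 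Second, I would argue that once $\GG[n+1]$ (hence $\kappa_\GG$) is fixed, the $p$-Sylow condition forces $A_\PP = \pi_{n+1}(\PP)$ to be a $p$-Sylow subgroup of $\pi_{n+1}(\GG)$, and that pulling back $\kappa_\GG$ along $\PP[n]\to\GG[n]$ produces a canonical candidate for $\kappa_\PP$. The essential claim is that this pullback $k$-invariant is the \emph{unique} choice compatible with the already-chosen map $\HH\to\PP$: the coherence data filling the upper triangle $\HH\to\PP\to\GG$ is controlled by a mapping space which, because $\HH$ is a $p$-$\infty$-group mapping into $p$-Sylow data, is contractible. Third, I would combine this fibrewise contractibility with the fact that the base-level groupoid $\mathtt{P}_{f[n]}$ is fixed to conclude that the fibres of $\mathtt{P}_{f[n+1]}\to\mathtt{P}_{f[n]}$ over each point are contractible, whence the map is an equivalence.

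\textbf{The main obstacle.} The hard part will be establishing that the relevant space of $k$-invariant lifts is contractible rather than merely connected or discrete. This reduces to a transfer-style argument: for a $p$-group (here $\pi_1\PP$ or more generally the homotopy of $\HH$) acting on a $\pi_{n+1}\GG$-module, restriction along a $p$-Sylow inclusion $\pi_{n+1}\PP\hookrightarrow\pi_{n+1}\GG$ is injective on cohomology (indeed split by the transfer), so that the obstruction groups controlling the existence and uniqueness of the lifted $k$-invariant behave well. I expect the cleanest route is to show directly that the square of $k$-invariants must be a homotopy pullback---that $\kappa_\PP$ is forced to be the pullback of $\kappa_\GG$---using the canonical square~(\ref{e:canonical}) together with the observation that a map out of a $p$-$\infty$-group factoring through $p$-Sylow data leaves no room for deformation. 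The delicate bookkeeping lies in handling the $G$-module structures coherently as the fundamental group varies, which is precisely what the relative-category formulation of Section~\ref{s:k-invariants} was designed to control.
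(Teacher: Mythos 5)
Your proposal follows essentially the same route as the paper: reduce via Theorem~\ref{t:k-invariants} to a lifting problem for $k$-invariants over $BG_p$, and show the resulting space of lifts is contractible by exploiting the $p$ versus prime-to-$p$ dichotomy in the coefficients, which is exactly how Proposition~\ref{p:factorization} is proved. The one point to sharpen is the final cohomological input: split injectivity of $H^*(-;A_p)\lrar H^*(-;A_\GG)$ ``by transfer'' is weaker than what is needed; the paper instead uses the canonical $G_p$-equivariant splitting $A_\GG\cong A_p\oplus A^{(p)}$ together with the full vanishing $\tilde{H}^*(\PP_0;A^{(p)})=\tilde{H}^*(\HH[n];A^{(p)})=0$ (a Serre class argument for $p$-spaces with prime-to-$p$ local coefficients), which makes both mapping spaces in the fibre sequence contractible and hence the fibre itself contractible.
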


We will prove that the map in Proposition~\ref{p:equivalence of groupoids} is an equivalence by proving that all its fibers are contractible. Then the proposition is equivalent to the following statement:

\begin{pro}\label{p:factorization}
Let $\HH[n] \lrar \PP_0 \lrar \GG[n] \in \mathtt{P}_{f[n]}$. We denote by $\mathtt{S}$ the $\infty$-groupoid whose objects are the possible completions in $\left(\Fin\right)_{\infty}$ of the diagram
$$
\xymatrix{
\HH \ar[d] \ar[rr]^f & \empty & \GG \ar[d] \\
\HH[n] \ar[r] & \PP_0 \ar[r] &  \GG[n] }
$$
 to a diagram
$$
\xymatrix{
\HH \ar[d] \ar[r] & \PP\ar[d]\ar[r]^{} & \GG \ar[d] \\
\HH[n] \ar[r] & \PP_0 \ar[r]^{} &  \GG[n] }
$$

where $\PP \lrar \GG$ is a $p$-Sylow map. Then $\mathtt{S}$ is contractible.
\end{pro}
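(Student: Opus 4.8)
The plan is to reduce the problem to a question about $k$-invariants and then to a cohomology vanishing statement for $p$-$\infty$-groups. By Theorem~\ref{t:k-invariants} (and Corollary~\ref{c:Cart}), passing from level $n$ to level $n+1$ is controlled by the $n^{\text{th}}$-level $k$-invariant data, i.e. a $\pi_{n+1}$-module together with a compatible $k$-invariant. Writing $G := \pi_1\GG$ and $M := \pi_{n+1}\GG$ (a $G$-module), the first step is to observe that the $p$-Sylow condition rigidifies the module part. Indeed, for $n \geq 1$ the group $M$ is abelian, so it has a unique $p$-Sylow subgroup, namely its $p$-primary part $M_{(p)}$, and this is a canonical direct summand $M \cong M_{(p)} \oplus M_{(p')}$ of $G$-modules. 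Hence any $p$-Sylow completion $\PP$ must satisfy $\pi_{n+1}\PP \cong M_{(p)}$ compatibly, and the space of such module choices is contractible; it remains to analyze the space of compatible $k$-invariants.

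Next I would construct the candidate lift and reduce contractibility to a vanishing statement. Using the canonical squares of $\GG$, $\PP_0$ and $\HH$ from~(\ref{e:canonical}), I would pull back the $k$-invariant of $\GG$ along $\PP_0 \lrar \GG[n]$; the splitting $M \cong M_{(p)} \oplus M_{(p')}$ splits this pulled-back invariant into a $p$-primary and a prime-to-$p$ component. The $p$-primary component produces the required space $\PP$ together with a $p$-Sylow map $\PP \lrar \GG$, while the prime-to-$p$ component is precisely the obstruction to completing the diagram compatibly. The key point is that both this obstruction and the higher indeterminacy of the lift are measured by the twisted cohomology groups $H^{j}(\PP_0; M_{(p')})$ and $H^{j}(\HH; M_{(p')})$ with $j \geq 1$.

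The crux is then the following vanishing lemma: if $Q$ is a finite $p$-$\infty$-group and $A$ is a finite coefficient module of order prime to $p$, then $H^{j}(Q; A) = 0$ for all $j \geq 1$. I would prove this by induction up the Postnikov tower of $Q$: each layer is an Eilenberg--MacLane space $K(\pi,m)$ with $\pi$ a finite $p$-group, whose reduced integral homology is all $p$-torsion, so by the Serre spectral sequence the reduced integral homology of $Q$ is $p$-torsion in every positive degree; universal coefficients (with the prime-to-$p$ local system $A$) then forces $H^{j}(Q; A) = 0$ for $j \geq 1$. Applying this to $\PP_0$ and $\HH$ (which are $p$-$\infty$-groups, since $\PP_0 \lrar \GG[n]$ is a $p$-Sylow map and $\HH$ is a $p$-$\infty$-group) kills every obstruction and every indeterminacy, so the space of completions $\mathtt{S}$ is nonempty and has contractible components, i.e. $\mathtt{S}$ is contractible.

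The main obstacle I anticipate is bookkeeping the equivariance. The summands $M_{(p)}$ and $M_{(p')}$ carry a $\pi_1$-action, so the relevant cohomology is twisted, and one must ensure that the lifting problem genuinely involves $M_{(p')}$ only in positive cohomological degrees (where the lemma applies) and not in degree zero. Threading the three-term diagram $\HH \lrar \PP \lrar \GG$ through the $k$-invariant formalism of Section~\ref{s:k-invariants} --- so that the fixed data below level $n$ together with the fixed outer map $f$ cut the relevant mapping spaces down to exactly their positive-degree part --- is where the real work lies; the purely homotopy-theoretic input is precisely the prime-to-$p$ vanishing above.
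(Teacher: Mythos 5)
Your proposal follows essentially the same route as the paper's proof: both reduce the completion problem to a lifting problem for $k$-invariants via Theorem~\ref{t:k-invariants}, split the coefficient module $\pi_{n+1}\GG$ into its $p$-primary summand and prime-to-$p$ summand, and conclude by the Serre-class vanishing of positive-degree twisted cohomology of the $p$-spaces $\PP_0$ and $\HH[n]$ with prime-to-$p$ coefficients. The paper packages the ``obstruction and indeterminacy'' step as the contractibility of an explicit homotopy fiber of mapping spaces over $BG_p$, but the mathematical content is the same as what you outline.
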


\begin{proof}

We first claim that $\mathtt{S}$ can be identified as the $\infty$-groupoid of lifts
$$\xymatrix{\Del^1\ar[r]^-{f}\ar[d] & \left(\SS^{\geq 1,\leq n+1}_*\right)_\infty\ar[d]^v\\ \Del^2\ar@{-->}[ur]^\ell\ar[r] & \left(\SS^{\geq 1,\leq n}_*\right)_\infty\times_{\Gp} \GpMod}$$ where $v$ is induced from the (relative) functor that associates to $\GG\in\SS^{1\geq, \leq n+1}_*$ the pair $(\GG[n], \pi_n \GG)$ over $\pi_1\GG$.
The lower-horizontal map is given by the pair of composable maps $$(\HH[n],A_\HH)\lrar (\PP_0,A_p)\lrar (\GG[n],A_\GG)$$ (here $A_\HH:=\pi_{n+1}\HH$, $A_\GG:=\pi_{n+1}\GG$ and $A_p$ is the (unique) $p$-Sylow subgroup of $A_\GG$). 

Let $G_p$ be the $p$-Sylow subgroup of $G=\pi_1(\GG)$ given by the image of $\pi_1(\PP_0)\lrar \pi_1(\GG)$. Using Theorem \ref{t:k-invariants} we can rewrite this as the $\infty$-groupoid of lifts
$$\xymatrix{\Del^1\ar[r]\ar[d] & \K[n]_\infty\ar[d]^{\bar{v}}\\ \Del^2\ar@{-->}[ur]^\ell\ar[r] & \left(\SS^{\geq 1,\leq n}_*\right)_{\infty}\times_{\Gp} \GpMod}$$
where $\bar{v}$ is induced from the relative functor that associates to a triple $$\left(X,A,\eta:X\overset{\simeq}{\twoheadleftarrow} Y\lrar K(A,n+2)//\pi_1(X) \right)$$ the pair $(X,A)$.

By construction, the fiber of $\bar{v}$ over $(X,A)$ is the mapping space $$\Map_{/B\pi_1X}\left(X,K(A,n+2)//\pi_1X\right)$$ and it follows that we can further identify $\mathtt{S}$ with the space of maps $$\PP_0\lrar K(A_p,n+2)//G_p$$ which fit into the following diagram in $\left(\Fin\right)_\infty$:

$$\xymatrix@R=1.5em@C=1.5em{\HH[n]\ar[dd]\ar[rr]\ar[dr]  & & K(A_\HH,n+2)//\pi_1(\HH)\ar[dd]\ar[dl]\\
&B\pi_1(\HH)\ar[dd]|\hole&\\
 \PP_0\ar@{-->}[rr]\ar[dd]\ar[dr]& & K(A_p,n+2)//G_p\ar[dd]\ar[dl]\\ 
 &BG_p\ar[dd]|\hole&\\
 \GG[n]\ar[dr]\ar[rr]& & K(A_\GG,n+2)//G \ar[dl]\\
 &BG&}$$ (all maps are over $BG$). 
 Note that we have a homotopy Cartesian square:
 $$\xymatrix{
 K(A_p,n+2)//G_p\hpbc\ar[r]\ar[d] & BG_p\ar[d]\\
 K(A_\GG,n+2)//G \ar[r] &  BG 
} $$
 Thus $\mathtt{S}$ is just the space of dashed lifts in the $\infty$-category of spaces over $BG_p$:

$$\xymatrix{\HH[n]\ar[r]\ar[d] & K(A_p,n+2)//G_p\ar[d]\\ \PP_0\ar@{-->}[ur]^\ell\ar[r] & K(A_\GG,n+2)//G_p}$$


As a $G_p$-module, $A_\GG$ factors naturally as a sum 
$$A_\GG = A_p \oplus A^{(p)}$$
where $A^{(p)}$ is the prime-to-$p$ part. Now, in the $\infty$-category of spaces over $BG_p$ we have 
$$K(A_\GG,n+2)//G_p \simeq K(A_p,n+2)//G_p \times K(A^{(p)},n+2)//G_p $$ so that $\mathtt{S}$ can be identified as the space of dashed lifts in $\left(\SS_{/BG_p}\right)_\infty$:

$$\xymatrix{\HH[n]\ar[r]\ar[d] & BG_p \ar[d]\\ \PP_0\ar@{-->}[ur]^\ell\ar[r] & K(A^{(p)},n+2)//G_p}$$

This space of lifts is in turn just the homotopy fiber of the map

$$
\xy<0.6cm,0cm>:
\POS (0,0) *+{\Map_{/BG_p}(\PP_0,K(A^{(p)},n+2)//G_p)\times_{\Map_{/BG_p}(\HH[n],K(A^{(p)},n+2)//G_p)}\Map_{/BG_p}(\HH[n],BG_p)}="a";
\POS (5,1.5) *+{(\star)};
\POS (0,3) *+{\Map_{/BG_p}(\PP_0,BG_p)}="b";
\POS "b" , \ar@{->} "a"
\endxy 
$$

We will show that all spaces above are contractible.
Firstly, the spaces \\$\Map_{/BG_p}(\HH[n],BG_p) $ and $\Map_{/BG_p}(\PP_0,BG_p) $ are contractible by definition.
Secondly, we have isomorphisms
$$\pi_i(\Map_{/BG_p}(\HH[n],K(A^{(p)},n+2)//G_p )) = \tilde{H}^{n+2-i}(\HH[n],A^{(p)})$$ 
$$\pi_i(\Map_{/BG_p}(\PP_0,K(A^{(p)},n+2)//G_p )) = \tilde{H}^{n+2-i}(\PP_0,A^{(p)}).$$   
Since $\PP_0$ and $\HH[n]$ are $p$-spaces, and $A^{(p)}$ is prime-to-$p$,  
these cohomology groups vanish by a Serre class argument. 
It follows that the map $(\star)$ is a map between contractible spaces and in particular has a contractible homotopy fiber $\mathtt{S}_\infty$.
\end{proof}

\begin{notn}
For a finite $\infty$-group $\GG$ and $f:\ast\lrar \GG$, we let $\mathtt{P}_p:=\mathtt{P}_f$ be the (homotopy discrete) $\infty$-groupoid whose objects are $p$-Sylow maps $\PP\lrar \GG$. 
\end{notn}

We are now at state to prove our analogue of the Sylow theorems.
\begin{thm}\label{t:syl}
Let $\GG$ be a finite $\infty$-group and let $p$ be a prime.
\begin{enumerate}
\item The $\infty$-groupoid $\mathtt{P}_p$ of $p$-Sylow maps $\PP\lrar \GG$ is equivalent to the \textbf{set} of $p$-Sylow subgroups of $\pi_1(\GG)$. In particular, $\mathtt{P}_p\neq \emptyset$ and $|\pi_0\mathtt{P}_p|\equiv 1\;(mod\; p)$.
\item
For every finite $p$-$\infty$-group $\HH$, any map $f:\HH\lrar \GG$ factors as $$\xymatrix{\HH\ar[rr]\ar[dr] && \GG\\ &\PP\ar[ur]& }$$ where $\PP\lrar \GG$ is a $p$-Sylow map.
\item
Every two $p$-Sylow maps are conjugate. 
\end{enumerate}
 
\end{thm}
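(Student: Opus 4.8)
The plan is to prove the three parts of Theorem~\ref{t:syl} by reducing them, via the Postnikov tower, to Proposition~\ref{p:equivalence of groupoids} (which we have already established through Proposition~\ref{p:factorization}) plus the classical Sylow theorems (Theorem~\ref{t:discrete syl}). The key organizing observation is the tower
$$\mathtt{P}_{f}\simeq\lim_n\mathtt{P}_{f[n]},$$
noted just before Proposition~\ref{p:equivalence of groupoids}, together with the fact that each map $\mathtt{P}_{f[n+1]}\lrar\mathtt{P}_{f[n]}$ in the tower is an equivalence for $n\geq 1$. Thus all the homotopical content collapses to the bottom stage $\mathtt{P}_{f[1]}$, which is a purely $1$-truncated---hence classical group-theoretic---object.

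\emph{Part (1).} Taking $\HH\simeq *$ and $f:*\lrar\GG$, the groupoid $\mathtt{P}_p=\mathtt{P}_f$ is by the tower equivalent to $\mathtt{P}_{f[1]}$. A factorization $*\lrar\PP\lrar\GG[1]=B\pi_1(\GG)$ with $\PP\lrar\GG[1]$ a $p$-Sylow map is, by definition of a $p$-Sylow map on $\pi_1$ and by Remark~\ref{r:conjugate}, precisely the datum of a $p$-Sylow subgroup inclusion $P\hookrightarrow\pi_1(\GG)$; the space of such, with its equivalences, is the \emph{discrete} set of $p$-Sylow subgroups of $\pi_1(\GG)$ (the mapping spaces between $1$-truncated classifying spaces being homotopically discrete once we pass to the groupoid spanned by inclusions of subgroups). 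Hence $\mathtt{P}_p$ is equivalent to that set, and the two concluding assertions---nonemptiness and $|\pi_0\mathtt{P}_p|\equiv 1\ (\mathrm{mod}\ p)$---are exactly parts (1)--(2) of the classical Theorem~\ref{t:discrete syl} applied to the finite group $\pi_1(\GG)$.

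\emph{Parts (2) and (3).} For part (2), given a finite $p$-$\infty$-group $\HH$ and a map $f:\HH\lrar\GG$, the space of $p$-Sylow factorizations of $f$ is $\mathtt{P}_f\simeq\mathtt{P}_{f[1]}$. The bottom stage asks for a factorization of $\pi_1(f):\pi_1(\HH)\lrar\pi_1(\GG)$ through a $p$-Sylow subgroup of $\pi_1(\GG)$, together with a corresponding completion on $\HH$; since $\pi_1(\HH)$ is a $p$-group, the existence of such a factorization is exactly the classical Sylow statement Theorem~\ref{t:discrete syl}(1), and Proposition~\ref{p:factorization} (contractibility of the fibers $\mathtt{S}$) lifts this single factorization coherently up the whole tower. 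This shows $\mathtt{P}_f\neq\emptyset$, which is the content of (2). For part (3), two $p$-Sylow maps $\PP\lrar\GG$ and $\PP'\lrar\GG$ determine, on $\pi_1$, two $p$-Sylow subgroups of $\pi_1(\GG)$, which are conjugate by Theorem~\ref{t:discrete syl}(3); translating this conjugacy through Remark~\ref{r:conjugate} (where a conjugation on the discrete level is recorded by an element of $\pi_1(\GG)$ acting via $\pi_1\GG\lrar\pi_0\aut^h_*\GG$) gives the required equivalence $\psi:\GG\overset{\simeq}{\lrar}\GG$ in the conjugacy class dictated by the definition, and the matching equivalence $\phi:\PP\overset{\simeq}{\lrar}\PP'$ is then supplied by contractibility of $\mathtt{P}_p$-fibers over each Sylow subgroup (part (1) says the groupoid is \emph{discrete}, so any two objects lying over conjugate subgroups are connected by a unique-up-to-homotopy equivalence).

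\emph{The main obstacle} I anticipate is part (3): unwinding the definition of ``conjugate'' for $\infty$-group maps and verifying that the classical conjugating element of $\pi_1(\GG)$ produces an equivalence $\psi$ whose class $[\psi]\in\pi_0\aut^h_*\GG$ genuinely lands in the image of $\pi_1\GG\lrar\pi_0\aut^h_*\GG$, rather than merely an abstract self-equivalence. The cleanest route is to observe that the whole comparison is controlled at the $1$-truncated level---where $\aut^h_*(B\pi_1\GG)$ and its conjugation subgroup recover $\aut(\pi_1\GG)$ and the inner automorphisms (as spelled out after the definition of the conjugation map)---and then to invoke the equivalence $\mathtt{P}_p\simeq\mathtt{P}_{f[1]}$ to transport this identification up the tower without loss of coherence.
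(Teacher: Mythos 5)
Your proposal is correct and follows essentially the same route as the paper: reduce to the $1$-truncated level via the equivalence $\mathtt{P}_f\simeq\mathtt{P}_{f[1]}$ (Proposition~\ref{p:factorization}), identify $\mathtt{P}_{f[1]}$ with the discrete set of $p$-Sylow subgroups of $\pi_1(\GG)$ containing the image of $\pi_1(f)$, and invoke the classical Sylow theorems together with Remark~\ref{r:conjugate} for the conjugacy statement. The handling of part (3) --- conjugating at the $\pi_1$ level and then using discreteness of $\mathtt{P}_{f[1]}$ to match the resulting objects --- is exactly the paper's argument.
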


\begin{rem}
For a finite group $G$, the classical Sylow theorems appearing in~\ref{t:discrete syl} can be obtained from Theorem~\ref{t:syl} above with $\GG:=BG$ (for part (3) see Remark~\ref{r:conjugate}). Note also that a $p$-Sylow subgroup $P$ of $G$ will generally have an interesting automorphism group.   
However, when $P$ is considered as an object in the category $\Gp^{\fin}_{/G}$ of finite groups over $G$ (via the inclusion $P\hrar G$), it admits no non-trivial automorphism. This is the reason we chose to work in the $\infty$-category $\left(\Fin_{/\GG}\right)_\infty$. 
\end{rem}

\begin{proof}

From Proposition~\ref{p:factorization} we conclude that
$$\mathtt{P}_f \simeq \mathtt{P}_{f[1]}.$$

Now $\mathtt{P}_{f[1]}$ is the $\infty$-groupoid of factorisations
$$\HH[1] \lrar \PP \lrar \GG[1].$$
Since the objects of such factorisations are all $1$-types, $\mathtt{P}_{f[1]}$ is in fact discrete and is equivalent to the \textbf{set} of $p$-Sylow subgroups of $\pi_1(\GG)$ that contain the image of $$\pi_1(f):\pi_1(\HH) \lrar \pi_1(\GG).$$

For $\HH= *$ we get that the $\infty$-groupoid of $p$-Sylow maps $\PP \lrar \GG$
is discrete and is equivalent to the \textbf{set} of $p$-Sylow subgroups
of $\pi_1(\GG)$. This proves $(1)$ and $(2)$.
To prove $(3)$, fix a $p$-Sylow subgroup $P_p\hrar G$. Since all $p$-Sylow subgroups in $\pi_1(\GG)$ are conjugate, every $p$-Sylow map $\PP_p\lrar \GG$ is conjugate to a $p$-Sylow map $\bar{\PP_p}\lrar \GG$ such that the image of $\pi_1\bar{\PP_p}\lrar \pi_1\GG$ is $P_p$. But now, all such $p$-Sylow maps are equivalent as objects of $\mathtt{P}_f$ since $$\mathtt{P}_f \simeq \mathtt{P}_{f[1]}.$$   
\end{proof}

We would like to point out that the analogy between Theorem \ref{t:syl} and the classical Sylow theorems is not complete. Recall that an ordinary finite group $G$ admits a unique $p$-Sylow subgroup if and only if the $p$-Sylow subgroup is normal. In the context of $\infty$-groups, there is a notion of normality which was introduced in \cite{FS} and developed in \cite{Pra} for loop spaces. Let us spell-out the corresponding notion for $\infty$-groups.    
\begin{defn}
A map of $\infty$-groups $f:\NN\lrar \GG$ is called \textbf{normal} if there is an $\infty$-group $\QQ$ and a map $\GG\lrar \QQ$ such that $\NN\lrar \GG\lrar \QQ$ is a homotopy fiber sequence. The data of the map $\pi:\GG\lrar \QQ$ and the (pointed) null-homotopy $\pi\circ f\simeq *$ is called a \textbf{normal structure}. We will also denote $\QQ:=\GG//\NN$ when the normal structure is clear from the context.   
\end{defn}

\begin{example} Although for an ordinary group, having  a unique $p$-Sylow subgroup is equivalent to the $p$-Sylow subgroup being normal, the analogous ``higher statement" is \bf{not true}. Namely there exists a finite $\infty$-group $\GG$ with a contractible $\infty$-groupoid of $p$-Sylow maps for which the essentially unique $p$-Sylow map $\PP_p \to \GG$ is not homotopy normal. For example, let $A$ be the unique $\mathbb{Z}/2$-module with $3$-elements and a non-trivial action.
Let $$X  = K(A,2)//(\mathbb{Z}/2).$$
we have $\pi_1(X) = \mathbb{Z}/2$  which is abelian. Thus there exist a unique $2$-sylow map
$$B(\mathbb{Z}/2) \to  K(A,2)//(\mathbb{Z}/2).$$
However this map is not homotopy normal. Indeed if it was we could have completed the diagram by adding
$$B(\mathbb{Z}/2) \to  K(A,2)//(\mathbb{Z}/2) \to K(\mathbb{Z}/3,2).$$
 This cannot happen by the functoriality of homotopy groups as $\pi_1$-modules.
\end{example}

\subsection*{A remark about $p$-completion.} When an $\infty$-group $\GG$ is nilpotent, the composite $\PP_p\lrar \GG\lrar \GG^{\wedge}_p$ of a $p$-Sylow map and the $p$-completion is an equivalence by Corollary~\ref{c:p-completion}. Thus, in this case, constructing a homotopy section to $\GG\lrar \GG^{\wedge}_p$ is the same as constructing a $p$-Sylow map. As was pointed out to us by Jesper Grodal, using this observation one can take an alternative approach for proving Theorem~\ref{t:syl}. Although this can be considered as a more direct construction of a $p$-Sylow map, we find the current proof of Theorem~\ref{t:syl} more suitable to the higher-categorical point of view taken in this paper.

\section{Applications}\label{s:applications}
We call a space \textbf{finite} if all its homotopy groups are finite.
In this section we will illustrate how Theorem~\ref{t:syl} can be applied. Our first application will be to establish an analogue of the Burnside's fixed point lemma. Our second application will be a characterisation of finite nilpotent spaces (viewed as $\infty$-groups) in terms of Sylow maps into them.

\subsection{Burnside's fixed point lemma for $p$-$\infty$-groups}\label{ss:Burnside}
Let us first recall a well-known corollary of Burnside's lemma:
\begin{pro}\label{p:burnside}
Let $G$ be a $p$-group and $X$ a finite $G$-set. If the order of $X$ is prime-to-$p$ then the fixed point set $X^G$ is non-empty. 
\end{pro}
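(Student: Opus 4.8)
The plan is to prove this statement, which is the classical corollary of Burnside's lemma, by the standard orbit-counting argument. Since this is the \emph{discrete} version (Proposition~\ref{p:burnside}), I would not invoke any of the $\infty$-group machinery; rather, this proposition is itself the classical input that the homotopical Burnside lemma will generalise.

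First I would decompose the finite $G$-set $X$ into its orbits, writing $X = \coprod_{i} O_i$ where each $O_i$ is a $G$-orbit. By the orbit-stabiliser theorem, $|O_i| = [G : \Stab(x_i)]$ for any chosen representative $x_i \in O_i$, so each orbit size divides $|G|$ and is therefore a power of $p$ (as $G$ is a $p$-group). In particular, every orbit has size either $1$ or a positive power of $p$. The orbits of size $1$ are precisely the fixed points, so $|X^G|$ counts exactly the singleton orbits.

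Next I would read this decomposition modulo $p$. Summing orbit sizes gives
$$|X| = |X^G| + \sum_{|O_i| > 1} |O_i|,$$
and since each orbit of size greater than $1$ has size divisible by $p$, the entire sum on the right is $\equiv 0 \pmod p$. Hence $|X| \equiv |X^G| \pmod p$. Now the hypothesis that $|X|$ is prime-to-$p$ means $|X| \not\equiv 0 \pmod p$, so $|X^G| \not\equiv 0 \pmod p$ as well. In particular $|X^G| \neq 0$, which is exactly the assertion that $X^G$ is non-empty.

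There is essentially no obstacle here, as the argument is entirely elementary once the orbit-stabiliser theorem is in hand; the only point requiring the hypotheses is the final congruence step, where both the $p$-group assumption (forcing nontrivial orbits to have $p$-power order) and the prime-to-$p$ assumption on $|X|$ are used. The genuine interest lies not in this proposition but in its homotopical analogue, where ``fixed point set'' is replaced by a homotopy fixed point space and the counting argument must be replaced by an argument through Sylow maps; I would expect the real work of this section to be in that lift rather than in the discrete statement proved above.
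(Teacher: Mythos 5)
Your proof is correct and is exactly the standard orbit-counting argument; the paper itself states Proposition~\ref{p:burnside} without proof, citing it as a well-known corollary of Burnside's lemma, so your argument supplies precisely the expected justification. Nothing further is needed.
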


The parallel statement is now:
\begin{pro}
Let $\GG$ be a finite $p$-$\infty$-group. Let $X$ be a finite space with an action of the Kan loop group $\K(\GG)$ of $\GG$. If all homotopy groups of $X$ are of prime-to-$p$ order, then the space of homotopy fixed points $X^{h\K(\GG)}$ is non-empty. 
\end{pro}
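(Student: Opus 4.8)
The plan is to identify the homotopy fixed point space with a space of sections and then produce a section using our Sylow theorem. Recall that $B\K(\GG)\simeq \GG$ and that $X^{h\K(\GG)}$ is naturally the space of sections of the Borel fibration $q\colon X//\K(\GG)\lrar \GG$, whose fibre is $X$. Hence it suffices to exhibit a section of $q$ up to homotopy, which may then be strictified by lifting the witnessing homotopy along the fibration $q$.

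First I would reduce to a connected situation. The set $\pi_0(X)$ is a finite $\pi_1(\GG)$-set of prime-to-$p$ cardinality and $\pi_1(\GG)$ is a $p$-group, so by the classical Burnside lemma (Proposition~\ref{p:burnside}) there is a $\pi_1(\GG)$-fixed component. Choosing a basepoint $x$ in such a component points $X//\K(\GG)$ and lets me pass to the basepoint component $E:=\left(X//\K(\GG)\right)^{\circ}$. Because $x$ lies in a fixed component, the restriction $q^{\circ}\colon E\lrar \GG$ is again a fibration, now with \emph{connected} fibre $X_0$ (the component of $x$), and it is surjective. All homotopy groups of $X_0$ are finite and prime-to-$p$ while those of $\GG$ are finite $p$-groups, so the long exact sequence of $q^{\circ}$ shows that $E$ is a finite $\infty$-group and that for every $n\geq 1$ there is a short exact sequence $1\to K_n\to \pi_n(E)\to \pi_n(\GG)\to 1$ with $K_n$ prime-to-$p$: surjectivity holds because the relevant boundary map lands in $\pi_{n-1}(X_0)$ (a point when $n=1$, as $X_0$ is connected) and any homomorphism from the $p$-group $\pi_n(\GG)$ into a prime-to-$p$ group is trivial, while $K_n$ is a quotient of the prime-to-$p$ group $\pi_n(X_0)$.

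Next I would apply Theorem~\ref{t:syl} to the finite $\infty$-group $E$ to obtain a $p$-Sylow map $\PP\lrar E$, and consider the composite $c\colon \PP\lrar E\overset{q^{\circ}}{\lrar}\GG$. The main point, and the step carrying the real content, is that $c$ is an equivalence. On $\pi_n$ it factors as the inclusion of a $p$-Sylow subgroup $\pi_n(\PP)\hookrightarrow \pi_n(E)$ followed by $q^{\circ}_*$. By the short exact sequence above, $\pi_n(\GG)$ is precisely the $p$-part of $\pi_n(E)$, so $|\pi_n(\PP)|=|\pi_n(\GG)|$; moreover $\pi_n(\PP)\cap K_n=1$, since the former is a $p$-group and the latter is prime-to-$p$, so $q^{\circ}_*$ is injective on $\pi_n(\PP)$ and hence an isomorphism onto $\pi_n(\GG)$ for every $n\geq 1$. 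Thus $c$ induces isomorphisms on all homotopy groups and is an equivalence of $\infty$-groups.

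Finally, choosing a homotopy inverse to $c$ yields a map $\GG\overset{c^{-1}}{\lrar}\PP\lrar E\hookrightarrow X//\K(\GG)$ which is a section of $q$ up to homotopy, and strictifying it shows $X^{h\K(\GG)}$ is non-empty. The only genuinely delicate point is the verification that the Sylow subgroup of the Borel construction maps isomorphically onto the base; everything else is the standard identification of homotopy fixed points with sections together with the long exact sequence. It is worth noting that the reduction step recovers the classical Burnside lemma at the level of $\pi_0$, so that the argument may be read as bootstrapping the classical statement up the Postnikov tower via the Sylow theorem.
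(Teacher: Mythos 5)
Your argument is correct and is essentially the paper's own proof: reduce to a connected fibre via the classical Burnside lemma, observe from the long exact sequence that $\pi_n(X//\K(\GG))\to\pi_n(\GG)$ has prime-to-$p$ kernel and cokernel, and then use Theorem~\ref{t:syl} to produce a $p$-Sylow map into the Borel construction whose composite to $\GG$ is an equivalence, yielding the desired homotopy section. The only cosmetic difference is that you pass to the basepoint component of $X//\K(\GG)$ rather than first replacing $X$ by a fixed connected component, and you spell out the short exact sequences in slightly more detail.
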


\begin{proof}
If $X$ is non-connected, Proposition~\ref{p:burnside} implies that there is a connected component of $X$ which is fixed by $\pi_0(\K(\GG))$ and thus by $\K(\GG)$. Hence, we may assume without loss of generality that $X$ is connected.   
We choose a point in $X$ so that we have a homotopy fiber sequence of pointed spaces:
$X\lrar X//\K(\GG)\overset{\gamma}{\lrar} \GG$. Recall that a homotopy fixed point can be equivalently described as a homotopy section $s:\GG\lrar X//\K(\GG)$ of $\gamma$. The long exact sequence takes the form 
$$...\lrar\pi_n(X)\lrar \pi_n(X//\K(\GG))\overset{c_n=\pi_n(\gamma)}{\lrar} \pi_n(\GG) \lrar \pi_{n-1}(X)\lrar...$$ which implies that $ker(c_n)$ and $coker(c_n)$ are of prime-to-$p$ order. Hence, $c_n$ maps every $p$-Sylow subgroup of $\pi_n(X//\K(\GG))$ isomorphically onto a $p$-Sylow subgroup of $\pi_n(\GG)$. If we now choose a $p$-Sylow map $\PP_p\lrar X//\K(\GG)$ we see that the composite map $\PP_p\lrar X//\K(\GG)\lrar \GG$ is a weak equivalence by the assumption on $\GG$. We thus have a homotopy section in the form $\GG\simeq \PP_p\lrar X//\K(\GG)$.     
\end{proof}

\subsection{finite nilpotent $\infty$-groups}\label{ss:nilpotent}

We finish this section with a result on nilpotency. Recall \cite[II.4]{BK} that a pointed connected space $X$ is \textbf{nilpotent} if $\pi_1X$ is nilpotent and acts nilpotently on $\pi_n(X)$ for all $n\geq 2$. 

\begin{defn}
An $\infty$-group $\GG$ is called \textbf{nilpotent} if its underlying space is nilpotent.
\end{defn}

Our goal in this subsection is to prove a parallel to the following classical theorem (see e.g. \cite[Theorem $5.39$]{Rot})
\begin{thm}\label{t:nilpotent} Let $G$ be a finite group. The following conditions are equivalent:
\begin{enumerate}
\item
$G$ is nilpotent.
\item
$G$ is isomorphic to the product of its Sylow subgroups.
\item
All Sylow subgroups of $G$ are normal. 
\end{enumerate}
\end{thm}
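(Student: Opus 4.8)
The plan is to prove the equivalence by establishing the cycle of implications $(1) \Rightarrow (3) \Rightarrow (2) \Rightarrow (1)$. Throughout, for each prime $p$ dividing $|G|$ I write $P_p$ for a $p$-Sylow subgroup and $N_G(P_p)$ for its normaliser. For $(1) \Rightarrow (3)$ I would combine two standard facts. First, a finite nilpotent group satisfies the \emph{normalizer condition}: every proper subgroup $H < G$ is strictly contained in $N_G(H)$; this follows by descending along the upper central series, since the last central term not contained in $H$ must normalise $H$. Second, the normaliser of a Sylow subgroup is self-normalising, $N_G(N_G(P_p)) = N_G(P_p)$, which is the classical argument that $P_p$ is the unique Sylow $p$-subgroup of $N_G(P_p)$ hence characteristic in it, combined with the Frattini argument. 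Applying the normalizer condition to $H = N_G(P_p)$: if $N_G(P_p)$ were proper in $G$ it would be strictly contained in $N_G(N_G(P_p))$, contradicting self-normality; hence $N_G(P_p) = G$, i.e. $P_p$ is normal in $G$.

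For $(3) \Rightarrow (2)$, assume every Sylow subgroup is normal. For distinct primes $p \neq q$ the orders of $P_p$ and $P_q$ are coprime, so $P_p \cap P_q = \{e\}$; normality of both then forces their elements to commute, since for $x \in P_p$ and $y \in P_q$ the commutator $[x,y]$ lies in $P_p \cap P_q = \{e\}$. An induction on the number of distinct prime divisors of $|G|$, using coprimality to see that each $P_p$ meets the product of the remaining Sylow subgroups trivially, shows that the multiplication map $\prod_p P_p \lrar G$ is an injective homomorphism realising $G$ as an internal direct product; comparing orders gives surjectivity, so $G \cong \prod_p P_p$.

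For $(2) \Rightarrow (1)$ I would invoke that every finite $p$-group is nilpotent — via the standard fact that a nontrivial $p$-group has nontrivial centre, so its upper central series reaches the whole group — together with the observation that a finite direct product of nilpotent groups is nilpotent, the lower central series of a product being the product of the lower central series. Since each $P_p$ is a $p$-group, the decomposition $G \cong \prod_p P_p$ from $(2)$ exhibits $G$ as nilpotent, closing the cycle. The main obstacle is the implication $(1) \Rightarrow (3)$, where the interaction between the normalizer condition and the self-normalising property of Sylow normalisers must be arranged with care; the remaining two implications are essentially bookkeeping with orders, commutators, and central series.
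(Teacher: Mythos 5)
Your proof is correct, but note that the paper does not actually prove this statement: Theorem~\ref{t:nilpotent} is quoted there as classical background, with a pointer to Rotman (Theorem~5.39), and only its homotopical analogue (Theorem~\ref{t:nil}) is proved in the text. Your cycle $(1)\Rightarrow(3)\Rightarrow(2)\Rightarrow(1)$ is the standard textbook argument and all three steps are sound: the normalizer condition for finite nilpotent groups via the upper central series, the self-normalising property $N_G(N_G(P_p))=N_G(P_p)$ (your appeal to uniqueness of $P_p$ as Sylow subgroup of $N_G(P_p)$ already suffices here; the Frattini argument is an alternative packaging, not an additional needed ingredient), the commutator trick $[x,y]\in P_p\cap P_q=\{e\}$ for distinct normal Sylow subgroups followed by the order count, and nilpotency of $p$-groups plus closure of nilpotency under finite products. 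One small point of comparison with the paper: its homotopical Theorem~\ref{t:nil} runs the implications in the opposite cyclic order, $(2)\Rightarrow(3)\Rightarrow(1)\Rightarrow(2)$, because in the $\infty$-group setting $(2)\Rightarrow(3)$ is the trivial leg (a product projection exhibits normality) and the hard work sits in $(1)\Rightarrow(2)$ via $k$-invariants; in the discrete setting your choice of cycle, with the normalizer condition carrying the weight in $(1)\Rightarrow(3)$, is the more economical one.
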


\begin{cor}\label{c:p-completion}
Let $\GG$ be a finite nilpotent $\infty$-group and consider its $p$-completion $\GG\lrar \GG^{\wedge}_p$. If $\PP_p\lrar \GG$ is a (essentially unique) $p$-Sylow map, then the composite $\PP_p\lrar \GG\lrar \GG^{\wedge}_p$ is an equivalence. 
\end{cor}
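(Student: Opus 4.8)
The plan is to prove this by reducing the nilpotent case to a component-wise statement about $p$-completion. First I would recall the classical fact that a finite nilpotent $\infty$-group $\GG$ splits as a product over its primes: since $\pi_1(\GG)$ is a finite nilpotent group it is the product of its Sylow subgroups, and since $\pi_1(\GG)$ acts nilpotently on each $\pi_n(\GG)$, the standard nilpotent localisation theory of \cite{BK} gives a natural equivalence $\GG\simeq \prod_q \GG^{\wedge}_q$, where $q$ ranges over the (finitely many) primes dividing the order of some homotopy group. In this decomposition the $p$-completion $\GG^{\wedge}_p$ is precisely the factor at $p$, and the completion map $\GG\lrar \GG^{\wedge}_p$ is the projection onto that factor.

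Next I would analyse the effect of a $p$-Sylow map on homotopy groups under this splitting. By Theorem~\ref{t:syl}(1), a $p$-Sylow map $\PP_p\lrar \GG$ induces on each $\pi_n$ the inclusion of a $p$-Sylow subgroup. For a finite nilpotent group each $\pi_n(\GG)$ (for $\pi_1$ as a group, and each $\pi_n$ as an abelian group for $n\geq 2$) decomposes canonically as a product $\prod_q \pi_n(\GG)_{(q)}$ of its $q$-primary parts, and the unique $p$-Sylow subgroup is exactly the $p$-primary factor $\pi_n(\GG)_{(p)}$. Thus $\pi_n(\PP_p)\overset{\cong}{\lrar}\pi_n(\GG)_{(p)}$, and this is precisely the image of $\pi_n(\GG^{\wedge}_p)$ under the completion map, since $p$-completion of a finite nilpotent space realises exactly the $p$-primary parts of the homotopy groups (the homotopy groups of $\GG^{\wedge}_p$ are $\pi_n(\GG)\otimes \mathbb{Z}_p$, which for finite groups is the $p$-primary part $\pi_n(\GG)_{(p)}$).

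The core of the argument is then to check that the composite $\PP_p\lrar \GG\lrar \GG^{\wedge}_p$ induces an isomorphism on every homotopy group. Tracking through the two maps: the $p$-Sylow map identifies $\pi_n(\PP_p)$ with the $p$-primary part $\pi_n(\GG)_{(p)}$ inside $\pi_n(\GG)$, while the completion map kills the prime-to-$p$ factors and is an isomorphism on the $p$-primary part. Composing, $\pi_n(\PP_p)\lrar \pi_n(\GG^{\wedge}_p)$ is the composite $\pi_n(\GG)_{(p)}\hookrightarrow \pi_n(\GG)\twoheadrightarrow \pi_n(\GG)_{(p)}$, which is the identity and hence an isomorphism. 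By Whitehead's theorem for the relevant connected spaces, a map inducing isomorphisms on all homotopy groups is an equivalence, giving the result.

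The main obstacle I anticipate is making the identification ``$p$-completion $=$ projection onto the $p$-primary factor'' fully rigorous at the level of the naturality needed to commute with the $p$-Sylow map, rather than merely on abstract homotopy groups. The subtlety is that the $p$-Sylow map is only specified up to the homotopy-coherent data packaged by Theorem~\ref{t:syl}, so I must ensure that the identification of $\pi_n(\PP_p)$ with $\pi_n(\GG)_{(p)}$ is compatible with the $\pi_1$-action used in the nilpotent decomposition and with the completion map. Once the nilpotency of $\GG$ is used to guarantee that the $p$-local information is an honest retract (and not merely an associated graded piece), the homotopy-group bookkeeping is routine, and the essential uniqueness of the $p$-Sylow map is supplied directly by Theorem~\ref{t:syl}(1).
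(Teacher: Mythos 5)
Your argument is correct, but it takes a different route from the one the paper intends. The paper in fact states this corollary without proof: in context it is meant to be a consequence of Theorem~\ref{t:nil}, whose implication $(1)\Rightarrow(2)$ (proved by Postnikov induction using the $k$-invariant machinery of Section~\ref{s:k-invariants}) gives $\GG\simeq\prod_{q|G}\PP_{q}$, after which one observes that $p$-completion annihilates each factor $\PP_{q}$ with $q\neq p$ (its homotopy groups are finite $q$-groups) and fixes $\PP_p$, so the composite $\PP_p\lrar\GG\lrar\GG^{\wedge}_p$ becomes the inclusion of a product factor followed by the projection onto it. You bypass Theorem~\ref{t:nil} entirely and instead invoke the classical Bousfield--Kan splitting $\GG\simeq\prod_q\GG^{\wedge}_q$ for finite nilpotent spaces, reducing everything to the homotopy-group computation that $\pi_n(\PP_p)\hookrightarrow\pi_n(\GG)\twoheadrightarrow\pi_n(\GG)_{(p)}$ is an isomorphism, followed by Whitehead's theorem; this is more elementary and does not depend on the paper's product decomposition, at the cost of importing the identification of $\pi_*(\GG^{\wedge}_p)$ with the $p$-primary parts from \cite{BK}. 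Two small remarks: the ``naturality'' worry you raise at the end is a non-issue, since on each $\pi_n$ the composite is literally a composite of two concrete homomorphisms (an injection whose image is the $p$-primary direct factor, followed by a surjection whose kernel is the complementary factor), and such a composite is an isomorphism by elementary group theory with no homotopy-coherent compatibility needed; and the formula $\pi_n(\GG)\otimes\mathbb{Z}_p$ is only literally meaningful for $n\geq 2$, while for $\pi_1$ one should instead say the $p$-Sylow direct factor of the finite nilpotent group $\pi_1(\GG)$ --- neither point affects the validity of your proof.
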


Let $\GG$ be a finite nilpotent $\infty$-group. Since $G=\pi_1(\GG)$ is a finite nilpotent group there is a unique $p$-Sylow subgroup $P_p\leq G$ for every prime $p$ with $p|G$. It follows that for every prime $p$, the relative category $\mathtt{P}_p$ of $p$-Sylow maps $\PP\lrar \GG$ and equivalences is weakly contractible. In other words, for each prime $p$, there is an essentially unique $p$-Sylow map $\PP_p\lrar \GG$.  

\begin{defn}
Let $\GG$ be a finite $\infty$-group. A collection of Sylow maps \\$\{\PP\lrar \GG\}$ is called \textbf{ample} if no two maps are equivalent (over $\GG$) and the collection $\{\pi_1(\PP)\hrar \pi_1(\GG)\}$ equals the set of $p$-Sylow subgroups of $G$, for all primes $p|G$.
\end{defn}

\begin{thm}\label{t:nil}
Let $\GG$ be a finite $\infty$-group with $G=\pi_1(\GG)$ and let $\{\PP_{p}\lrar \GG\}_{p|G}$ be an ample collection of Sylow maps into $\GG$. The following conditions are equivalent:
\begin{enumerate}
\item
$\GG$ is nilpotent. 
\item
$\GG\simeq \prod_{p|G}\PP_{p}.$
\item
For every prime $p|G$, the map $\PP_{p}\lrar \GG$ is normal.
\end{enumerate}
\end{thm}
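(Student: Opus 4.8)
The plan is to prove the three conditions equivalent by a cycle of implications, leaning on the corresponding group-theoretic result (Theorem~\ref{t:nilpotent}) applied levelwise to the homotopy groups, together with the Sylow machinery of Theorem~\ref{t:syl}. I would organise the argument as $(1)\Rightarrow(2)\Rightarrow(3)\Rightarrow(1)$, since the product decomposition is the most structurally informative statement and makes both the other implications transparent.

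\emph{For $(1)\Rightarrow(2)$.} Assume $\GG$ is nilpotent. The key input is that a finite nilpotent space splits as a product of its $p$-completions: $\GG\simeq\prod_{p\mid G}\GG^{\wedge}_p$, a standard fact for finite nilpotent spaces (see \cite{BK}). By Corollary~\ref{c:p-completion}, for each prime $p\mid G$ the essentially unique $p$-Sylow map $\PP_p\lrar\GG$ composes with $p$-completion to an equivalence $\PP_p\overset{\simeq}{\lrar}\GG^{\wedge}_p$. Substituting these equivalences into the product decomposition yields $\GG\simeq\prod_{p\mid G}\PP_p$, which is exactly $(2)$. Here one must check that the ampleness hypothesis guarantees the $\PP_p$ in the given collection \emph{are} these essentially unique Sylow maps (nilpotence of $G$ forces each $p$-Sylow subgroup, hence each $p$-Sylow map, to be unique up to equivalence), so the collection is indexed precisely by the primes dividing $G$.

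\emph{For $(2)\Rightarrow(3)$.} Suppose $\GG\simeq\prod_{p\mid G}\PP_p$. Fix a prime $q\mid G$ and let $\QQ:=\prod_{p\neq q}\PP_p$, so that $\GG\simeq\PP_q\times\QQ$. The projection $\GG\lrar\QQ$ then exhibits $\PP_q\lrar\GG$ as a homotopy fiber sequence $\PP_q\lrar\GG\lrar\QQ$, with the evident null-homotopy of the composite; this is precisely a normal structure in the sense of the definition preceding the statement. Hence each $\PP_q\lrar\GG$ is normal, giving $(3)$.

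\emph{For $(3)\Rightarrow(1)$.} This is the implication I expect to be the main obstacle, since normality of each Sylow map is a homotopy-coherent statement and must be converted into levelwise nilpotence data on $\pi_1$ and its action on the higher $\pi_n$. The plan is to argue one homotopy group at a time using the long exact sequence of the fiber sequence $\PP_p\lrar\GG\lrar\GG//\PP_p$ supplied by the normal structure. On $\pi_1$, normality of every $\PP_p\lrar\GG$ forces each $p$-Sylow subgroup of $G=\pi_1(\GG)$ to be normal, whence $G$ is nilpotent by the equivalence $(3)\Leftrightarrow(1)$ of Theorem~\ref{t:nilpotent}. For the higher groups one must show $\pi_1(\GG)$ acts nilpotently on each $\pi_n(\GG)$; the idea is that the normal structures let one compare the $G$-action on $\pi_n(\GG)$ with the (nilpotent) action obtained from the product-like decomposition forced at each Postnikov level, using that the $p$-primary part of $\pi_n(\GG)$ is controlled by $\PP_p$. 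The delicate point is ensuring the coherent null-homotopies assemble compatibly across Postnikov stages; I would handle this by inducting up the Postnikov tower, at each stage invoking the $k$-invariant description of Theorem~\ref{t:k-invariants} to see that the normal structure on $\PP_p\lrar\GG[n+1]$ restricts the relevant $k$-invariant so that the extended action on $\pi_{n+1}$ remains nilpotent.
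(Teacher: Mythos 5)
Your cycle $(1)\Rightarrow(2)\Rightarrow(3)\Rightarrow(1)$ matches the paper's logical structure, and your $(2)\Rightarrow(3)$ is exactly what the paper dismisses as clear. But the implication you yourself flag as ``the main obstacle,'' $(3)\Rightarrow(1)$, is left as a placeholder precisely where the real content lies, and the route you sketch for it (inducting up the Postnikov tower and constraining $k$-invariants via Theorem~\ref{t:k-invariants}) is not what is needed. The paper's argument for the higher homotopy groups is much more direct and avoids any coherence issues across Postnikov stages: from the long exact sequence of $\PP_p\lrar\GG\lrar\GG//\PP_p$, the prime-to-$p$ part of $\pi_n(\GG)$ embeds $\pi_1(\PP_p)$-equivariantly into $\pi_n(\GG//\PP_p)$, on which $\pi_1(\PP_p)$ acts trivially (its image in $\pi_1(\GG//\PP_p)$ vanishes by exactness), so $\pi_1(\PP_p)$ acts trivially on the prime-to-$p$ part of $\pi_n(\GG)$; on the $p$-primary part it acts nilpotently by the purely algebraic Lemma~\ref{l:nil} (a $p$-group acts nilpotently on any finite module of $p$-power order). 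Combining these over all $p\mid G$ gives nilpotence of the $G$-action on each $\pi_n(\GG)$ with no reference to $k$-invariants at all. Without this (or an equivalent) mechanism, your $(3)\Rightarrow(1)$ is not a proof.

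Your $(1)\Rightarrow(2)$ is a genuinely different route from the paper's, and it carries a circularity risk as written: Corollary~\ref{c:p-completion} is stated in the paper without an independent proof and is most naturally obtained as a consequence of Theorem~\ref{t:nil} itself (via the decomposition $\GG\simeq\prod_p\PP_p$), so invoking it to prove $(1)\Rightarrow(2)$ requires you to first supply a standalone proof --- which is possible using the Bousfield--Kan description of $\pi_*(\GG^{\wedge}_p)$ for finite nilpotent spaces, but must be said. You also import the arithmetic splitting $\GG\simeq\prod_{p}\GG^{\wedge}_p$ from \cite{BK} as a black box. The paper instead proves $(1)\Rightarrow(2)$ internally, by induction on the Postnikov tower: at each stage the canonical homotopy Cartesian square (\ref{e:canonical}) decomposes term by term (the $k$-invariant splits by a Serre class argument), and the general case follows by passing to the limit of truncations. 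Your approach is shorter if the $p$-completion inputs are granted; the paper's is self-contained and consistent with the $k$-invariant machinery it develops in Section~\ref{s:k-invariants}.
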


Before going to the proof of Theorem~\ref{t:nil}, let us recall an algebraic fact:
\begin{lem}\label{l:nil}
Let $H$ be a $p$-group and $M$ a finite $H$-module  of a $p$-power order. Then $M$ is nilpotent  as a $H$-module.
\end{lem}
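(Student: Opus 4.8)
The plan is to prove that a finite $H$-module $M$ of $p$-power order over a $p$-group $H$ is nilpotent by showing that the $H$-action has a nontrivial fixed-point submodule, and then inducting on the order of $M$. The key statement I would establish first is that $M^H\neq 0$ whenever $M\neq 0$. This is exactly the module-theoretic incarnation of the classical fact that a $p$-group acting on a nonempty finite set whose cardinality is a power of $p$ has a fixed point (and more precisely, the number of fixed points is congruent to $|M|$ modulo $p$). Concretely, $M$ is a finite $H$-set under the action, $|M|=p^k$ is a power of $p$, and by the orbit-counting/class-equation argument the fixed set $M^H$ has cardinality congruent to $|M|\equiv 0 \pmod p$; since $0\in M^H$ always, $|M^H|$ is a positive multiple of $p$, hence $M^H$ contains a nonzero element.

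Having established $M^H\neq 0$, I would set up the lower central filtration for $M$ as an $H$-module and argue it terminates. Recall that the relevant notion of nilpotent action is via the filtration defined by $\Gamma^0 M = M$ and $\Gamma^{i+1}M = \langle (h-1)m : h\in H,\ m\in \Gamma^i M\rangle$ (the augmentation-ideal filtration $I_H^i M$ where $I_H\subseteq \mathbb{Z}[H]$ is the augmentation ideal); the action is nilpotent precisely when $\Gamma^j M = 0$ for some $j$. The plan is to prove this terminates by a descending induction. The fixed-point statement $M^H\neq 0$ is the engine: I would instead run an \emph{ascending} induction using the submodule $M^H$. Since $M^H$ is a nonzero $H$-submodule on which $H$ acts trivially, and $M/M^H$ is again a finite $H$-module of $p$-power order (of strictly smaller order), by induction on $|M|$ the action on $M/M^H$ is nilpotent, i.e. its augmentation filtration terminates. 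Lifting this back, one gets that $I_H^{j}M\subseteq M^H$ for some $j$, and then $I_H^{j+1}M = I_H\cdot M^H = 0$ because $H$ acts trivially on $M^H$. Hence the filtration of $M$ terminates and $M$ is nilpotent.

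The base case $M=0$ is trivial, and the inductive step reduces the order strictly since $M^H\neq 0$ forces $|M/M^H|<|M|$. I would organise the write-up as: (i) observe $M$ is a finite set of $p$-power order with an $H$-action by a $p$-group; (ii) invoke the fixed-point argument (class equation) to get $M^H\neq 0$; (iii) induct on $|M|$, applying the inductive hypothesis to $M/M^H$ and the trivial-action vanishing $I_H\cdot M^H=0$ to conclude the augmentation filtration of $M$ terminates.

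The main obstacle is essentially bookkeeping rather than conceptual: one must be careful that the nonabelian group $H$ acts \emph{linearly} so that $M^H$ is genuinely a submodule (true, since fixed points of a module action form a submodule) and that $M/M^H$ inherits a $p$-power order and an $H$-action (immediate). The only genuinely substantive input is the fixed-point lemma for $p$-groups acting on $p$-power sets, which is classical; everything else is a clean induction on $|M|$ combined with the observation that $H$ kills $M^H$, making the last step of the filtration vanish.
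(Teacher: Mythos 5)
Your proof is correct. Note that the paper itself states Lemma~\ref{l:nil} as a recalled algebraic fact and supplies no proof, so there is nothing to compare against; your argument --- the class-equation count giving $|M^H|\equiv |M|\equiv 0 \pmod p$ and hence $M^H\neq 0$, followed by induction on $|M|$ applied to $M/M^H$ and the observation that $I_H\cdot M^H=0$ kills the last stage of the augmentation filtration --- is the standard and complete way to establish it.
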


\begin{proof}[Proof of Theorem~\ref{t:nil}]
\begin{enumerate}
\item 
$(2)\Rightarrow (3):$ Clear.
\item
$(3)\Rightarrow (1):$ In that case, $\pi_1(\PP_{p})$ is normal in $\pi_1(\GG)$ by the LES for $\PP_{p}\lrar \GG\lrar \GG//\PP_{p}$ so that all the Sylow subgroups of $G=\pi_1(\GG)$ are normal and $G=\pi_1(\GG)\cong\displaystyle\mathop{\prod}_{p|G} \pi_1(\PP_{p})$ is thus nilpotent. Note that $\pi_n(\GG)\cong \displaystyle\mathop{\prod}_{p|G} \pi_n(\PP_{p}) $

The LES for $\PP_{p}\lrar \GG\lrar \GG//\PP_{p}$ also implies  that $\pi_1(\PP_p)$ must act trivially on the prime-to-$p$ part of each $\pi_n(\GG)$. Using Lemma~\ref{l:nil} with $H=\pi_1(\PP_{p})$ we deduce that $\GG$ is nilpotent.
\item 
$(1)\Rightarrow (2):$ Suppose first that $\GG$ is $n$-truncated. We argue by induction on $n$. If $n=1$, the decomposition $G=\pi_1(\GG)\cong \displaystyle\mathop{\prod}_{p|G}\pi_1(\PP_{p})$ implies a decomposition $\GG\simeq \prod_{p|G} \PP_{p}.$ Suppose $\GG$ is of homotopical dimension $n+1$. We denote $A:=\pi_{n+1}(\GG)$, $A_{p}:=\pi_{n+1}(\PP_{p})$ and $G_{p}:=\pi_1(\PP_{p})$. The canonical homotopy Cartesian square of~\ref{e:canonical} takes the form
\begin{equation}\label{e:nil}
\xymatrix{
\GG \ar[d]\pbc\ar@{}[d]|(0.30){\;\;\;\;\;\;\;\;\;\;\;\sim}\ar[r]& B\pi_1(\GG)\ar[d]\\
\GG[n] \ar[r]^-{k} & K(A,n+2)//\pi_1(\GG).
}
\end{equation}
Since $\GG$ is nilpotent, the terms in the right-upper and right-lower part of the diagram decompose into a product and the induction assumption implies that the left-lower term decomposes as well. The homotopy Cartesian square of~\ref{e:nil} thus takes the form
$$\xymatrix{
\GG \ar[d]\pbc\ar@{}[d]|(0.30){\;\;\;\;\;\;\;\;\;\;\;\sim}\ar[r]& \prod_{p|G} BG_{p}\ar[d]\\
\prod_{p|G} P_n(\PP_{p}) \ar[r]^-{k} & \prod_{p|G} K(A_{p},n+2)//G_{p}.}$$ 
The right-vertical map clearly decomposes to the product of the corresponding maps. The lower-horizontal map decomposes as well, by a Serre class argument.
It follows that $\GG\simeq \prod_{p|G} \PP_{p}.$ 

If $\GG$ is an arbitrary finite $\infty$-group, then it is a limit of its $n$-truncations in the $\infty$-category $\left(\Fin\right)_\infty$ and the statement follows.
\end{enumerate}
\end{proof}

\end{document}